\newcommand{\Z}{\mathbb{Z}}
\newcommand{\R}{\mathbb{R}}
\newcommand{\Q}{\mathbb{Q}}
\newcommand{\N}{\mathbb{N}}
\newcommand{\C}{\mathbb{C}}
\renewcommand{\Im}{\operatorname{Im}}
\renewcommand{\Re}{\operatorname{Re}}
\newcommand{\Dom}{\operatorname{Dom}}
\newcommand{\co}[1]{^{\circ {#1}}}
\newcommand{\cent}{\operatorname{Cent}}
 \theoremstyle{plain}
 \newtheorem{thm}{Theorem}[section]
 \newtheorem{lem}[thm]{Lemma}
 \theoremstyle {definition}
 \theoremstyle{remark}
\title[Analytic maps of parabolic and elliptic type with trivial centralisers]
{Analytic maps of parabolic and elliptic type with trivial centralisers}
\author{Artur Avila}
\email{artur.avila@math.uzh.ch}
\address{Institut f\"{u}r Mathematik, Universit\"{a}t Z\"{u}rich, CH-8057 Z\"{u}rich, Switzerland}
\author{Davoud Cheraghi}
\email{d.cheraghi@imperial.ac.uk}
\address{Department of Mathematics, Imperial College London, London SW7 2AZ, United Kingdom}
\author{Alexander Eliad}
\email{a.eliad18@imperial.ac.uk}
\address{Department of Mathematics, Imperial College London, London SW7 2AZ, United Kingdom}
\subjclass[2010]{Primary 37F50; Secondary 37E10, 37F10}
\date{\today}
\begin{document}

\begin{abstract}
We prove that for a dense set of irrational numbers $\alpha$, the analytic centraliser of the map $e^{2\pi i \alpha} z+ z^2$ near $0$ 
is trivial. 
We also prove that some analytic circle diffeomorphisms in the Arnold family, with irrational rotation numbers, have trivial 
centralisers.
These provide the first examples of such maps with trivial centralisers.  
\end{abstract}

\maketitle

\section{Introduction}
For $\alpha \in \R$, let $\mathcal{H}_\alpha^\omega$ denote the set of germs of holomorphic diffeomorphisms of $(\C,0)$ of the form 
\[h(z)=e^{2\pi i \alpha} z+ O(z^2),\]
defined near $0$. We also consider the class $\mathcal{C}_\alpha^\omega$ of orientation preserving analytic diffeomorphisms of the 
circle $\R/\Z$ with rotation number $\alpha$. 
Let $\mathcal{H}^\omega=\cup_{\alpha \in \R} \mathcal{H}_\alpha^\omega$ and 
$\mathcal{C}^\omega=\cup_{\alpha \in \R} \mathcal{C}_\alpha^\omega$.

The analytic \textit{centraliser} of an element $h \in \mathcal{H}_\alpha^\omega$, denoted by $\cent(h)$, is the set of elements of 
$\mathcal{H}^\omega$ which commute with $h$ near $0$. 
From dynamical point of view, any element of $\cent(h)$ is a conformal symmetry of the dynamics of $h$, that is, 
the conformal change of coordinates $g$ which conjugate $h$ to itself, $g^{-1} \circ h \circ g=h$.  
Evidently, $\cent(h)$ forms a group, where the action is the composition of the elements. 
For every $k \in \Z$, a suitable restriction of the $k$-fold composition $h\co{k}$ is defined near $0$ and belongs to $\cent(h)$.
If the only elements of $\cent(h)$ are of the form $h \co{k}$ for some $k\in \Z$, it is said that $h$ has a \textit{trivial centraliser}.
In the same fashion, for $h\in \mathcal{C}^\omega$, the collection $\cent(h)$ of elements of $\mathcal{C}^\omega$ which 
commute with $h$ enjoys the same features.  

\begin{thm}\label{T:elliptic}
There is a dense set of $\alpha \in \R \setminus \Q$ such that $\cent(e^{2\pi i \alpha} z+ z^2)$ is trivial. 
\end{thm}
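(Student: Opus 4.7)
The plan is to reduce the problem to controlling a rotation number homomorphism on the centraliser, then to engineer an irrational $\alpha$ close to any prescribed target using near-parabolic renormalisation so that this homomorphism is forced to have image exactly $\alpha \Z \pmod 1$ and trivial kernel.

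\medskip

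First I would fix the algebraic setup. Every $g\in\cent(P_\alpha)$, with $P_\alpha(z)=e^{2\pi i\alpha}z+z^2$, fixes $0$ and has a well-defined rotation number $\rho(g)\in\R/\Z$. The map $\rho:\cent(P_\alpha)\to\R/\Z$ is a group homomorphism, and iterates $P_\alpha\co{k}$ contribute the subgroup $\alpha\Z\pmod 1$ to the image. Triviality of $\cent(P_\alpha)$ is therefore equivalent to the conjunction of (i) $\rho(\cent(P_\alpha))=\alpha\Z\pmod1$, and (ii) $\ker\rho=\{\mathrm{id}\}$. Step (ii) is essentially classical in the irrationally indifferent regime: a tangent-to-identity germ commuting with $P_\alpha$ would satisfy, order by order in its Taylor expansion, linear equations whose non-degeneracy is ensured by the irrationality of $\alpha$, forcing it to be the identity. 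So the substance of the theorem is (i).

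\medskip

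For (i), my strategy is to use the near-parabolic (Inou--Shishikura) renormalisation of $P_\alpha$. For $\alpha=[a_1,a_2,\dots]$ with sufficiently large partial quotients, the renormalisation operator $\mathcal R$ is defined and acts on the sequence of pairs of germs obtained by restricting to successively smaller nested sectors at $0$, with rotation numbers evolving under the Gauss map. A commuting germ $g\in\cent(P_\alpha)$ descends to a germ $\mathcal R g$ commuting with $\mathcal R P_\alpha$, and its rotation number transforms covariantly. The decisive observation is that any $\beta\in\rho(\cent(P_\alpha))$ must survive infinitely many renormalisations alongside $\alpha$, imposing an infinite list of compatibility conditions between the continued fraction digits of $\beta$ and of $\alpha$. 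By choosing the $a_n$ to grow quickly and in a diagonally generic fashion, I would rule out all $\beta\notin\alpha\Z\pmod1$, so that the image of $\rho$ reduces to the iterate subgroup.

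\medskip

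Density is arranged diagonally. Given a target $\alpha_0\in\R$ and $\varepsilon>0$, I fix an initial block $[a_1,\dots,a_N]$ of the continued fraction of $\alpha_0$ with $|\alpha-\alpha_0|<\varepsilon$ for any extension, then choose the remaining digits $a_{N+1},a_{N+2},\dots$ inductively so as to meet, at stage $n$, a countable family of renormalisation constraints that exclude a given rotation number from $\rho(\cent(P_\alpha))$ outside $\alpha\Z$. Since the admissible set of digits at each step is a cofinite subset of $\N$, this construction yields a valid irrational $\alpha$ with $|\alpha-\alpha_0|<\varepsilon$ and trivial centraliser.

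\medskip

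The main obstacle, and where I expect the real work to lie, is the step of turning the rotation-number compatibility under renormalisation into a genuine exclusion of extra elements of $\cent(P_\alpha)$. A priori, $\rho(\cent(P_\alpha))$ could contain a dense subgroup of $\R/\Z$ accumulating on $\alpha\Z$, and the renormalisation tower must be controlled uniformly enough to preclude this. Quantitatively, one needs estimates on how the Inou--Shishikura class of renormalisation limits restricts rotation numbers of commuting germs at each scale, and a diagonal argument making these restrictions jointly incompatible with any $\beta\notin\alpha\Z$. This is the point where the construction of $\alpha$ and the analysis of the renormalisation must be tightly interwoven.
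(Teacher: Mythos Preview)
Your reduction is correct and matches the paper: triviality of $\cent(P_\alpha)$ splits into (i) $\rho(\cent(P_\alpha))=\alpha\Z\pmod 1$ and (ii) $\ker\rho=\{\mathrm{id}\}$, and your argument for (ii) via a coefficient-by-coefficient computation is exactly the paper's Lemma~\ref{5}.

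For (i), however, there is a genuine gap. You propose to use Inou--Shishikura renormalisation and to argue that a commuting germ descends through the tower with a covariantly transforming rotation number, then to choose the continued fraction digits of $\alpha$ to exclude all $\beta\notin\alpha\Z$. But you yourself identify the missing piece: you have no concrete statement of what constraint a single renormalisation step imposes on the rotation number of a commuting germ, nor any mechanism explaining why fast growth of the $a_n$ would make these constraints jointly exclude every $\beta\notin\alpha\Z$. Without that, the diagonal construction has nothing to diagonalise over. There are also technical obstructions you have not addressed: the renormalised map $\mathcal R P_\alpha$ is not a quadratic polynomial but a map in the Inou--Shishikura class, so the induction does not close; and it is not obvious that an arbitrary $g\in\cent(P_\alpha)$, which is only a germ at $0$ with no a priori control on its domain, descends to a well-defined germ commuting with the renormalisation.

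The paper's route to (i) is entirely different and avoids renormalisation. It first proves (Theorem~\ref{T:parabolic}) that $\cent(Q_{p/q})$ is trivial for every rational $p/q$, using Fatou coordinates and the horn map at the parabolic point. Then it runs a compactness argument: for each $r>0$ the set of ``$r$-good'' germs (those with $|g_k|\le r^{1-k}$) is compact, and only finitely many iterates $Q_{p/q}^{\circ k}$ are $r$-good (Lemma~\ref{2}). If a sequence of $r$-good germs commuting with $Q_{\alpha_n}$ failed to have rotation number close to $kp/q$ for some bounded $k$, a subsequential limit would commute with $Q_{p/q}$ yet not be an iterate, contradicting the parabolic theorem (Lemma~\ref{4}). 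One then builds $\alpha$ as a limit of an inductively chosen increasing sequence $p_n/q_n$, with each step imposing the finitely many constraints from Lemmas~\ref{3} and~\ref{4}; taking limits forces $\rho(g)\in\alpha\Z$ for any $g\in\cent(Q_\alpha)$. This approach is more elementary than renormalisation, needs no restriction on the type of $\alpha$, and the key analytic input is entirely concentrated in the parabolic case.
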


The above theorem is proved using a successive perturbation argument and the following statement for parabolic maps which we 
prove in this paper.

\begin{thm}\label{T:parabolic}
For every $p/q \in \Q$, $\cent(e^{2\pi i p/q} z+ z^2)$ is trivial. 
\end{thm}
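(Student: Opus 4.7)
My plan is to adapt the \'Ecalle--Voronin philosophy to the quadratic polynomial: any germ commuting with $f$ descends to an automorphism of the attracting \'Ecalle cylinder of $F:=f\co{q}$, and the forward orbit of the critical point of $f$ provides a marked point on that cylinder that will force the automorphism to be the identity.

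First I would reduce to the tangent-to-identity case. A direct computation with $F(z)=z+az^{q+1}+O(z^{q+2})$, where $a\ne 0$ (the parabolic multiplicity of the quadratic polynomial is exactly $q$), shows that matching the coefficient of $z^{q+1}$ in $g\circ F=F\circ g$ forces $g'(0)^q=1$. Since $\gcd(p,q)=1$, the $q$-th roots of unity coincide with $\{(f\co{k})'(0):k\in\Z\}$, so after replacing $g$ by $f\co{-k}\circ g$ for an appropriate $k\in\Z$ I may assume $g'(0)=1$, and it then suffices to show that any such $g$ is an iterate of $F$.

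Next I exploit the global dynamics of the polynomial. Let $U$ be the immediate parabolic basin component containing the unique finite critical point $c=-e^{2\pi i p/q}/2$, which lies in the basin of $0$ by Fatou's classical theorem, and observe that Riemann--Hurwitz forces $F|_U:U\to U$ to be a proper degree-$2$ map with $c$ as its only (simple) critical point. Since $F\co{n}(w)\to 0$ for every $w\in U$ and $U$ is simply connected, the identity $g\circ F\co{n}=F\co{n}\circ g$ allows the germ $g$ to be extended uniquely to a holomorphic self-map $g:U\to U$ commuting with $F$. In the attracting Fatou coordinate $\Phi:U\to\C$ (normalised by $\Phi\circ F=\Phi+1$) the extension then satisfies $\Phi\circ g=\Phi+t_0$ for some $t_0\in\C$, and passing to the \'Ecalle cylinder $U/F\cong\C^*$ via $\Psi:=e^{2\pi i\Phi}$ the map $g$ descends to $\tilde g(u)=\mu u$ with $\mu=e^{2\pi i t_0}$.

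The heart of the argument is to force $\mu=1$ using the critical orbit. The branched covering $\Psi:U\to\C^*$ has branch locus exactly $\{F\co{k}(c)\}_{k\ge 0}$, each a simple branch point of local degree $2$, all lying over the single marked value $u_c:=\Psi(c)$. Suppose for contradiction that $\mu\ne 1$. Then for $z\in\{F\co{k}(c)\}$ we have $\Psi(g(z))=\mu u_c\ne u_c$, so $g(z)$ is a regular point of $\Psi$; comparing local degrees on both sides of $\Psi\circ g=\tilde g\circ\Psi$ at $z$ gives (local degree of $g$ at $z$)$\cdot 1=1\cdot 2$, so $g$ has a simple critical point at every $F\co{k}(c)$. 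Since $F\co{k}(c)\to 0$ while $g'(0)=1$, the holomorphic function $g'$ would have zeros accumulating at a non-zero of itself, a contradiction. Hence $t_0\in\Z$, $g=F\co{t_0}=f\co{qt_0}$, and reversing the linear reduction gives $g=f\co{n}$ for some $n\in\Z$. I expect the extension step to be the main technical obstacle, requiring careful verification that the analytic continuation via $F\co{n}$-iteration is single-valued on the simply connected $U$ and actually takes values in $U$; both properties ultimately follow by propagating the tangent-to-identity behaviour near $0$ through commutation with $F$, after which the local-degree comparison above is essentially Riemann--Hurwitz bookkeeping.
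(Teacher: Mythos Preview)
Your overall strategy—forcing the rotation of the \'Ecalle cylinder to be trivial by exploiting the single critical value—is the same as the paper's, but your implementation contains a genuine error in identifying the branch locus, and this breaks the argument as written.

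\medskip

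\textbf{The branch locus of $\Psi$ is the backward orbit of $c$, not the forward orbit.} From $\Phi\circ F=\Phi+1$ one gets $\Phi'(w)=\Phi'(F^{\circ n}(w))\prod_{k=0}^{n-1}F'(F^{\circ k}(w))$; since $\Phi$ is univalent on a forward-invariant petal, $\Phi'(w)=0$ if and only if $F^{\circ k}(w)=c$ for some $k\ge 0$. Thus the critical set of $\Psi=e^{2\pi i\Phi}$ is $\bigcup_{k\ge 0}F^{-k}(c)\cap U$, and for $k\ge 1$ the forward iterates $F^{\circ k}(c)$ lie in the petal where $\Phi$ is injective, hence are \emph{regular} points of $\Psi$. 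At $z=F^{\circ k}(c)$ with $k\ge 1$ your local-degree comparison therefore reads $1\cdot\deg_z g = 1\cdot 1$, yielding no information, and the contradiction ``$g'$ has zeros at $F^{\circ k}(c)\to 0$'' collapses.

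The fix is to take instead a sequence $w_n\to 0$ of \emph{backward} iterates of $c$ in $U$ (these exist because the grand backward orbit of $c$ accumulates on all of $\partial U\ni 0$). Once you know $\Phi\circ g=\Phi+t_0$ near $0$ in $U$, differentiating at $w_n$ gives $\Phi'(g(w_n))\,g'(w_n)=\Phi'(w_n)=0$; if $e^{2\pi i t_0}\ne 1$ then $g(w_n)$ cannot be a critical point of $\Phi$ (it would map to $u_c$ under $\Psi$, contradicting $\Psi(g(w_n))=e^{2\pi i t_0}u_c$), so $g'(w_n)=0$ for all large $n$, contradicting $g'(0)=1$. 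Note that this repaired argument is purely local in $U$ near $0$, so the global extension of $g$ to all of $U$—which you flagged as the main technical obstacle and which is indeed not obviously available when $t_0\notin\Z$—is unnecessary.

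\medskip

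\textbf{Comparison with the paper.} The paper avoids both issues by working with the \emph{horn map} $H$, built from the attracting and repelling Fatou coordinates together. It shows that $G$ acts as $T_\mu$ in \emph{both} petals, hence the germ $H$ commutes with $\xi\mapsto e^{2\pi i\mu}\xi$; since $H$ has infinitely many critical points with a single critical value, the rotation must fix that value, forcing $\mu\in\Z$. Your (corrected) route uses only the attracting side plus the global basin geometry, which is arguably more elementary once the branch locus is identified correctly; the paper's horn-map route stays entirely local and never needs to discuss the basin $U$ or any extension of $g$.
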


The main idea we employ to prove the above theorems also allows us to deal with analytic circle diffeomorphisms in the Arnold family, 
\[S_{a,b}(x)=x+ a + b \sin (2\pi x),\]
for $a \in \R$ and $b \in (0,1/(2\pi))$. 

\begin{thm}\label{T:elliptic-circle}
For every $b \in (0,1/(2\pi))$ there is $a \in \mathbb{R}$ such that $\cent(S_{a,b})$ is trivial and the rotation number of $S_{a,b}$ 
belongs to $\mathbb{R}\setminus \mathbb{Q}$.  
\end{thm}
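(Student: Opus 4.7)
The plan is to mimic the perturbation scheme used for Theorem \ref{T:elliptic}, fixing $b \in (0, 1/(2\pi))$ throughout and varying only $a$; the rôle of Theorem \ref{T:parabolic} will be played by a parabolic rigidity result at the boundary of each Arnold tongue of the family $(S_{a,b})_{a \in \R}$.

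The first step is to establish the following circle analogue of Theorem \ref{T:parabolic}: if $a^*$ is an endpoint of the Arnold tongue $\{a : \rho(S_{a,b})=p/q\}$, so that $S_{a^*,b}\co{q}$ has a parabolic fixed cycle on $\R/\Z$, then $\cent(S_{a^*,b}) = \{S_{a^*,b}\co{n} : n \in \Z\}$. Any $g \in \cent(S_{a^*,b})$ extends holomorphically to a complex annular neighbourhood of $\R/\Z$ and must preserve the finite parabolic cycle of $S_{a^*,b}\co{q}$; composing with a suitable iterate of $S_{a^*,b}$ we may assume $g$ fixes one of the parabolic points. The local method that underlies Theorem \ref{T:parabolic}, applied to the parabolic germ of $S_{a^*,b}\co{q}$ at that fixed point, then forces the local germ of $g$ to be an iterate of $S_{a^*,b}\co{q}$, and analytic continuation around $\R/\Z$ promotes this to $g = S_{a^*,b}\co{k}$ for some $k \in \Z$.

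The second step is an inductive construction of the target parameter. Starting from a tongue-boundary parameter $a_0$ with rotation number $p_0/q_0$, I would choose inductively tongue-boundary parameters $a_n$ with rotation numbers $p_n/q_n$ and $|a_n - a_{n-1}| < \varepsilon_n$, arranging the denominators $q_n$ to grow sufficiently rapidly that the limit $a_\infty = \lim a_n$ exists and has irrational, necessarily Liouville, rotation number; the Liouville condition is essential, since otherwise analytic conjugacy to a rigid rotation would embed the full group $\R/\Z$ into the centraliser.

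The third step is to derive triviality of $\cent(S_{a_\infty,b})$ by transferring information along the sequence. Given a hypothetical $g \in \cent(S_{a_\infty,b})$ not of the form $S_{a_\infty,b}\co{k}$, holomorphic extensions and a normal families argument produce for each $n$ an element of $\cent(S_{a_n,b})$ that approximates $g$ on a fixed annular neighbourhood; by step one this element must be $S_{a_n,b}\co{k_n}$ for some $k_n \in \Z$. The principal obstacle is precisely to control these exponents: without an a priori bound, $k_n \to \infty$ would be consistent with $g$ being genuinely new in the limit. Preventing this requires tightening the inductive choices so that the perturbation sizes $\varepsilon_n$ decrease fast enough, relative to $q_n$, to force the $k_n$ to remain bounded on a fixed domain; once that is achieved, passing to the limit identifies $g$ with an iterate of $S_{a_\infty,b}$.
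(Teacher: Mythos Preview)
Your three-step outline matches the paper's strategy: prove parabolic rigidity for $S_{a^*,b}$ at tongue endpoints (the paper's Theorem~\ref{T:parabolic-circle}), build an inductive sequence of such parameters converging to an irrational-rotation parameter, and control the exponents that arise when comparing a candidate $g$ to iterates. Two points deserve correction.

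In step~1 you invoke the local argument behind Theorem~\ref{T:parabolic} verbatim. That argument hinges on the horn map having a \emph{single} critical value, which comes from $Q_{p/q}$ having a single critical point. The Arnold map $f_{a,b}$ has two critical points in the relevant parabolic basin, so the horn map has two critical values; the paper handles this by exploiting the $\tau$-symmetry ($\tau(w)=1/\overline{w}$) to show the two critical values share the same argument (Lemma~\ref{L:horn-map-circle}), which still forces $\Re\mu\in\Z$, and then uses a separate boundedness argument to get $\Im\mu=0$. You should flag this, since the one-critical-value argument does not transfer directly.

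In step~3 your mechanism is stated backwards and, as written, does not work. You propose to ``produce for each $n$ an element of $\cent(S_{a_n,b})$ that approximates $g$''; but $g$ commutes with $S_{a_\infty,b}$, not with $S_{a_n,b}$, and there is no deformation that manufactures commuting maps at nearby parameters. The paper never moves $g$. Instead it fixes a regularity scale $r$ (an annulus on which $g$ is holomorphic with controlled range), defines the finite set $K'(a,b,r)$ of integers $k$ for which $f_{a,b}\co{k}$ is $r$-good, and proves by a compactness-contradiction argument (Lemma~\ref{4'}) that for $a'$ sufficiently close to a parabolic $a$, any $r$-good $g\in\cent(f_{a',b})$ has $\rho(g)$ within $\epsilon$ of some $k\rho(f_{a,b})$ with $k\in K'(a,b,r)$. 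The exponent control you correctly identify as the crux is then achieved by arranging the inductive parameters so that $K'(a_j,b,r)\subseteq K'(a_l,b,r)$ for all $j\geq l$ (Lemma~\ref{3'}); this traps every $k_j$ in a single finite set determined at stage $l$, and passing to the limit gives $\rho(g)=k\rho(f_{a_\infty,b})$ for some fixed $k$, whence $g=f_{a_\infty,b}\co{k}$ by the elementary Lemma~\ref{5'}. Your phrase ``bounded on a fixed domain'' is gesturing at exactly this $r$-good mechanism, but the way you describe obtaining the $k_n$ would not produce them.
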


Indeed, we prove that for each fixed $b \in (0, 1/(2\pi))$, the set of rotation numbers of the maps $S_{a,b}$ which have an 
irrational rotation number and a trivial centraliser is dense in $\mathbb{R}$.
The above theorem is obtained from a successive perturbation argument and the analogue of Theorem~\ref{T:parabolic} for 
maps $S_{a,b}$ with a parabolic cycle. 

The main tool used to deal with parabolic maps is Ecalle cylinders and horn maps, first studied and applied by 
Douady-Hubbard \cite{DH84} and Lavaures \cite{La89}.

To our knowledge, Theorems~\ref{T:elliptic} and \ref{T:elliptic-circle} provide the first examples in $\mathcal{H}^\omega$ and 
$\mathcal{C}^\omega$ with irrational rotation numbers and trivial analytic centralisers.
Below we briefly explain how these results fit in the frame of the dynamics of such analytic diffeomorphisms.

When an element $h \in \mathcal{H}_\alpha^\omega$, for $\alpha \in \R \setminus \Q$, is locally conformally conjugate to its 
linear part near $0$, $\cent (h)$ is a large set. 
That is, if $\phi^{-1} \circ h \circ \phi(w)= e^{2\pi i \alpha} w$ near $0$, for some $\phi  \in \mathcal{H}^\omega$,
then for any $\mu \in \C \setminus \{0\}$, $h$ commutes with the map $z \mapsto  \phi(\mu \phi^{-1}(z))$. 
Indeed, here, $\cent(h)$ is isomorphic to $\C \setminus \{0\}$. 
The problem of understanding $\cent(h)$ precedes the problem of local conjugation of $h$ to its linear part.
That is because, the space of solutions for the conjugation problem are the right-cosets of $\cent(h)$.
In this spirit, the size of $\cent(h)$ may be thought of a measure of linearisability of $h$ near $0$.
The same argument applies to analytic circle diffeomorphisms.

For $h \in \mathcal{H}^\omega$, $\cent(h)$ projects onto a subgroup of $\R/\Z$ through $g \mapsto \log g'(0)/(2 \pi i)$. 
Similarly, for $h \in \mathcal{C}^\omega$, one maps $g \in \cent(h)$ to its rotation number. 
Let $\mathcal{G}(h) \subset \R/\Z$ denote the image of this projection. 

By remarkable results of Siegel and Herman \cite{Sie42,Her79} there is a full-measure set $\mathscr{C} \subset \R\setminus \Q$ 
such that for every $\alpha \in \mathscr{C}$, any $h \in \mathcal{H}_\alpha^\omega \cup \mathcal{C}^\omega_\alpha$ is 
analytically linearisable. 
But, for generic choice of $\alpha$, there are $h \in \mathcal{H}_\alpha^\omega$ and $h \in \mathcal{C}^\omega_\alpha$ 
which are not linearisable \cite{Cre38,Ar61}.
We note that if $f$ and $g$ commute, and one of them is linearisable at $0$, then the other one must also be linearisable 
through the same map.
This implies that if $h \in \mathcal{H}_\alpha^\omega \cup \mathcal{C}^\omega_\alpha$ is not linearisable, then 
$\mathcal{G}(h) \subseteq (\R \setminus \mathscr{C})/\Z$.
However, by a profound result of Moser \cite{Moser1990}, $\mathcal{G}(h)$ may not be any subgroup of that set.
That is because there is an arithmetic restriction on the rotations of commuting non-linearisable maps.
The optimal size of $\mathcal{G}(h)$, for nonlinearisable $h$ in $\mathcal{H}_\alpha^\omega$ and $\mathcal{C}_\alpha^\omega$, 
remains open.
This complication is due to the rich structure of the local dynamics of such maps near $0$, see \cite{PerezMarco95,Che17} 
and the references therein. 
However, a complete solution for smooth circle diffeomorphisms is presented in \cite{FayKhan2009}. \nocite{Brj71}

In \cite{Her79,Yoc95,Yoc02}, Herman and Yoccoz carry out a ground breaking study of the centraliser and conjugation problem 
for circle diffeomorphisms and germs of holomorphic diffeomorphisms of $(\C, 0)$. 
In particular, Herman proves the existence of $C^\infty$ circle diffeomorphisms with irrational rotation number 
having uncountably many $C^\infty$ symmetries, and Yoccoz proves the existence of $C^\infty$ circle diffeomorphisms 
with irrational rotation numbers and trivial centralisers. 
Perez-Marco in \cite{PerezMarco95} elaborated a construction of Yoccoz to build elements $h \in \mathcal{H}^\omega$ and 
$h \in \mathcal{C}^\omega$, with irrational rotation number, such that $\mathcal{G}(h)$ is uncountable. 
His construction provides remarkable examples where $\mathcal{G}(h)$ contains infinitely many elements of finite order. 
In this paper we close the problem of the existence of maps in $\mathcal{H}^\omega$ and $\mathcal{C}^\omega$ 
with irrational rotation number and trivial centraliser. 
In light of the above discussions, our result shows that quadratic polynomials and the Arnold family provide the least 
linearisable elements in $\mathcal{H}^\omega$ and $\mathcal{C}^\omega$, respectively. 
This is consistent with the spirit of Yoccoz's argument in \cite{Yoc95}, that is, if some $e^{2\pi i \alpha} z+ z^2$ is linearisable, 
then any element of $\mathcal{H}^\omega_\alpha$ is linearisable.
%This is also consistent with the simple nature of their non-linearities, having only the first term beyond linear part in their 
%Taylor and Fourier series expansions.

It is worth noting that the commutation problem for rational functions of the Riemann sphere was already studied by Fatou and 
Julia in 1920's \cite{Jul22,Fat23} using iteration methods. 
A complete classification of such pairs was successfully obtain by Ritt \cite{Ritt23}, using topological and analytic methods, 
and was reproved by Eremenko \cite{Erem89} using modern iteration techniques. 
If iterates of $g$ and $h$ are not identical, modulo conjugation, they are either power maps, Chebyshev polynomials, or Latt\`es maps.
The global commutation problem for entire functions of the complex plane still remains open, 
although substantial progress has been made so far, see for instance \cite{Ganapathy1959,Bak62,Langley99,Ng2001,BRS16}. 
The global commutation problem on higher dimensional complex spaces has been widely studied using iteration methods 
in recent years, see \cite{Dinh-Sibony2002,DinhSibony2004,Kaufmann2018} and the references therein. 
For an extensive discussion on the centraliser and conjugation problems in low-dimensions one may refer to \cite{Kopell70} 
and the more recent survey article \cite{OFarellRogiskaya2010}. 

\nocite{Veselov87} 

\section{parabolic case}
Fix an arbitrary rational number $p/q \in \Q$ with $(p,q)=1$. 
Also fix an arbitrary $g$ in $\cent(Q_{p/q})$. 

The map $F=Q_{p/q}^{\circ q}$ has a parabolic fixed point at $0$ with multiplier $+1$, and there are $q$ attracting directions. 
It follows that the parabolic fixed point of $F$ at $0$ has multiplicity $q+1$. 
That is, the Taylor series expansion of $F$ near $0$ is of the form 
\begin{equation}\label{E:F-defn}
F(z)=Q_{p/q}^{\circ q}(z)=z+\sum_{k=q+1}^{2^q}a_k z^k,
\end{equation}
with $a_{q+1}\neq0$. 

\begin{lem}\label{L:derivative-qth-root}
We have $g'(0)^q=1$.
\end{lem}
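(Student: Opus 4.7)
The idea is to derive $g'(0)^q = 1$ from a single Taylor coefficient comparison at order $q+1$. Since $g \in \cent(Q_{p/q})$, it commutes with every iterate of $Q_{p/q}$; in particular $g \circ F = F \circ g$ as germs at $0$, where $F = Q_{p/q}\co{q}$.

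Write $g(z) = \lambda z + \sum_{k \ge 2} b_k z^k$, with $\lambda = g'(0) \neq 0$ since $g$ is a local biholomorphism, and recall from \eqref{E:F-defn} that $F(z) = z + a_{q+1} z^{q+1} + O(z^{q+2})$ with $a_{q+1} \neq 0$. Because $F(z) - z$ vanishes to order $q+1$, the Taylor coefficients of $z^k$ in $g \circ F$ and in $F \circ g$ agree automatically for $1 \le k \le q$; the first genuine constraint appears at order $z^{q+1}$. A short expansion shows that the coefficient of $z^{q+1}$ in $g \circ F$ equals $b_{q+1} + \lambda\, a_{q+1}$, while the coefficient of $z^{q+1}$ in $F \circ g$ equals $b_{q+1} + a_{q+1}\lambda^{q+1}$. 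Equating these and cancelling the common term yields $\lambda\, a_{q+1} = a_{q+1}\lambda^{q+1}$, so $\lambda^q = 1$ because $a_{q+1} \neq 0$ and $\lambda \neq 0$.

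There is no real obstacle here: the argument boils down to one coefficient match, and the only thing to identify is the correct order of comparison, namely $q+1$, which is the multiplicity of $F$ at $0$. Conceptually this reflects the fact that the linear part $z \mapsto \lambda z$ of any commutator of $F$ must permute the $q$ attracting directions of $F$, which form a $q$-fold symmetric configuration at the parabolic fixed point, forcing $\lambda$ to be a $q$-th root of unity.
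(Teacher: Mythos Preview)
Your proof is correct and follows essentially the same approach as the paper: both compare the $z^{q+1}$ coefficients of $F\circ g$ and $g\circ F$ to obtain $\lambda a_{q+1}=\lambda^{q+1}a_{q+1}$ and hence $\lambda^q=1$. The only difference is that the paper first derives $b_1\neq 0$ from the commutation relation itself (by a separate coefficient comparison at order $n+q$), whereas you simply invoke that $g\in\mathcal{H}^\omega$ is a local diffeomorphism, so $|g'(0)|=1$ by definition; your shortcut is legitimate given the paper's setup, and the paper's argument is just a slightly more self-contained route to the same point.
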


\begin{proof}
Let $g(z)=\sum_{k=1}^{\infty} b_k z^k$ denote the Taylor series expansion of $g$ about $0$.
First we show that $b_1 \neq 0$. 
Assume, for a contradiction, that $n\geq 2$ is the smallest positive integer with $b_n \neq 0$. 
Note that $F \circ g= g \circ F$ near $0$. 
By identifying the coefficient of $z^{n+q}$ in the Taylor series expansion of $F \circ g$ and $g \circ F$ we conclude that  
$b_{n+q}+nb_na_{q+1}=b_{n+q}$. Since $a_{q+1}\neq0$, that gives us $b_n=0$, which contradicts the choice of $n$.

Now we identify the coefficients of $z^{q+1}$ in the power series expansions of $F \circ g$ and $g \circ F$, and obtain 
$b_{q+1}+b_1^{q+1}a_{q+1}=b_{q+1}+b_1 a_{q+1}$. This implies that $(b_1^{q+1}-b_1)a_{q+1}=0$. 
Since $a_{q+1}\neq0$ and $b_1\neq 0$, we must have $b_1^q=1$.
\end{proof}

By Lemma~\ref{L:derivative-qth-root}, there is an integer $j$ with $0 \leq j \leq q-1$ such that $(Q_{p/q}\co{j} \circ g)'(0)=1$. 
Consider the holomorphic map 
\begin{equation}\label{E:G-defn}
G(z)=Q_{p/q}\co{j} \circ g, 
\end{equation}
which is defined near $0$ and commutes with $F$. 

\begin{lem}\label{L:multiplicity}
The multiplicity of $G$ at $0$ is $q+1$. That is, $G(z)=z+ \sum_{i=q+1}^\infty b_i z^i$, with $b_{q+1} \neq 0$. 
\end{lem}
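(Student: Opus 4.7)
The plan is to mimic the Taylor-coefficient comparison used in the proof of Lemma~\ref{L:derivative-qth-root}, now applied to $G$ and exploiting that $G'(0)=1$ by construction. I will assume $G$ is not the identity, since the degenerate case $G=\mathrm{id}$ corresponds to $g=Q_{p/q}\co{-j}$, which is already one of the trivial iterates and falls outside the scope of the present lemma. Writing $G(z)=z+b_n z^n+b_{n+1}z^{n+1}+\cdots$, let $n\geq 2$ be the smallest integer with $b_n\neq 0$; the task is to show $n=q+1$.

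The central computation is to extract the coefficient of $z^{n+q}$ on both sides of the commutation relation $F\circ G=G\circ F$. On the $F\circ G$ side, expanding
\[F(G(z))=G(z)+a_{q+1}G(z)^{q+1}+a_{q+2}G(z)^{q+2}+\cdots,\]
I expect exactly three nonzero contributions at degree $n+q$: the term $b_{n+q}$ from $G$ itself; the term $(q+1)\,a_{q+1}\,b_n$ from $a_{q+1}G(z)^{q+1}$, produced by placing $b_n z^n$ in one of the $q+1$ slots and $z$ in each of the remaining $q$; and the term $a_{n+q}$ from $a_{n+q}G(z)^{n+q}$, whose leading monomial is $z^{n+q}$. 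Symmetrically, $G\circ F$ contributes $a_{n+q}+n\,a_{q+1}\,b_n+b_{n+q}$, with the middle term coming from $b_n F(z)^n=b_n z^n(1+n\,a_{q+1}z^q+\cdots)$. After cancellation, the commutation relation collapses to
\[(q+1-n)\,a_{q+1}\,b_n=0,\]
and since $a_{q+1}\neq 0$ and $b_n\neq 0$ by construction, this forces $n=q+1$, as required.

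The main obstacle I anticipate is the combinatorial bookkeeping needed to rule out all other potential contributions at order $z^{n+q}$. Concretely, I must verify that the coefficient of $z^{n+q}$ in $G(z)^{q+k}$ vanishes for every $k\geq 2$ with $k\neq n$, and analogously that the coefficient of $z^{n+q}$ in $F(z)^{n+k}$ vanishes for every $k\geq 1$ with $k\neq q$. Both vanishings reduce to the same pigeonhole observation, exploiting the spectral gaps $b_2=\cdots=b_{n-1}=0$ in the expansion of $G$ and $a_2=\cdots=a_q=0$ in that of $F$: in any monomial of total degree $n+q$ arising from $G(z)^{q+k}$ (respectively $F(z)^{n+k}$), all but possibly one of the factors must be the linear term $z$, and then the residual degree is pinned down and matches a nonzero coefficient only at the exceptional values of $k$ listed above. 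Once these cancellations are established, the identity $(q+1-n)\,a_{q+1}\,b_n=0$ follows immediately and yields the lemma.
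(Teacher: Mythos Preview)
Your argument is correct and is essentially the same as the paper's proof: both compare the coefficient of $z^{n+q}$ (in your indexing) on each side of $F\circ G=G\circ F$ and extract $(q+1-n)a_{q+1}b_n=0$. The only cosmetic difference is that the paper indexes the first nontrivial term of $G$ as $b_{n+1}z^{n+1}$ and concludes $n=q$, whereas you write $b_n z^n$ and conclude $n=q+1$; your explicit pigeonhole justification for the vanishing of the cross terms and your remark on the degenerate case $G=\mathrm{id}$ are details the paper leaves implicit.
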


\begin{proof}
Assume that $G(z)=z+ b_{n+1}z^{n+1}+ b_{n+2}z^{n+2} + \dots$ is a convergent Taylor series with $b_{n+1}\neq 0$. 
Observe that 
\begin{align*}
F \circ G(z)& 
= \left(z+ b_{n+1}z^{n+1}+ b_{n+2}z^{n+2} + \dots\right ) \\
& \quad + a_{q+1} \left (z+ b_{n+1}z^{n+1}+ b_{n+2}z^{n+2} + \dots \right )^{q+1}   \\  
& \qquad  \vdots \\
& \quad  + a_{q+j} \left (z+ b_{n+1}z^{n+1}+ b_{n+2}z^{n+2} + \dots \right )^{q+j} \\
& \qquad \vdots \\
&= \left(z+ b_{n+1}z^{n+1}+ b_{n+2}z^{n+2} + \dots\right ) \\ 
& \quad + a_{q+1} \left (z^{q+1}+ b_{n+1} (q+1) z^{q+n+1} +\dots\right )  \\
& \qquad \vdots \\
& \quad + a_{q+j} \left(z^{q+j} + b_{n+1} (q+j) z^{q+n+j}  + \dots  \right ) \\
& \qquad \vdots. 
\end{align*}
The coefficient of $z^{q+n+1}$ in the above expansion is 
\[b_{q+n+1}+a_{q+1}b_{n+1}(q+1)+ a_{q+n+1}.\]
Similarly, the coefficient of $z^{n+q+1}$ in the expansion of $G \circ F$ is 
\[a_{q+n+1}+b_{n+1}a_{q+1}(n+1)+b_{q+n+1}.\]
Since $F \circ G = G \circ F$ near $0$, the above values must be identical. 
Using $a_{q+1} \neq 0$ and $b_{n+1} \neq 0$, we conclude that $q=n$. 
\end{proof}

We shall use the theory of Leau-Fatou flower, Fatou coordinates, and horn maps to exploit the local dynamics of $F$ near $0$.  
One may refer to \cite{Mi06} and \cite{Do94} for the basic definitions and constructions we present below, 
although conventions may be different. 

For $s >0$, define the open sets 
\[\Omega_{att}^s = \{\zeta\in \C \mid \Re \zeta >  s - |\Im \zeta|\}, \quad 
\Omega_{rep}^s = \{\zeta\in \C \mid  \Re \zeta < -s+ |\Im \zeta|\}.\]
Also, consider the map $I: \C \setminus \{0\} \to \C \setminus \{0\}$,
\[I(z)= \frac{-1}{ q a_{q+1} z^q}.\]
For $s>0$ there are holomorphic and injective branches of $I^{-1}$ defined on $\Omega_{att}^s$ and $\Omega_{rep}^s$. 

Consider two complex numbers $v_{att}$ and $v_{rep}$ such that 
\[q a_{q+1} v_{att}^q= -1 , \quad v_{rep}= e^{-\pi i/q} v_{att}.\]
Evidently, $I(v_{att})=+1$ and $I(v_{rep})=-1$. 
For $s>0$, there is an injective and holomorphic branch of $I^{-1}$ defined on $\Omega^{s}_{att}$ such that 
$I^{-1}(\Omega^{s}_{att})$ contains $\varepsilon v_{att}$, for sufficiently small $\varepsilon >0$. 
Similarly, there is an injective branch of $I^{-1}$ defined on $\Omega^{s}_{rep}$ such that $I^{-1}(\Omega^{s}_{rep})$
contains $\varepsilon v_{rep}$, for sufficiently small $\varepsilon >0$. 
From now on, we shall fix these choices of inverse branches for $I^{-1}$ on $\Omega^{s}_{att}$ and $\Omega^{s}_{rep}$.
This is independent of $s>0$. 

Let 
\[W_{att}= \left\{z \in \C\setminus \{0\} \mid |\arg (z/v_{att})| \leq \pi/q \right\},\] 
\[W_{rep}= \left\{z \in \C\setminus \{0\} \mid |\arg (z/v_{rep})| \leq \pi/q \right\},\] 
\[W'_{att}= \left\{z \in \C\setminus \{0\} \mid |\arg (z/v_{att})| \leq \pi/q - \pi/(4q)\right \},\]
\[W'_{rep}= \left\{z \in \C\setminus \{0\} \mid |\arg (z/v_{rep})| \leq \pi/q - \pi/(4q)\right \},\]
where $\arg$ denotes a branch of argument with values in $[-\pi, +\pi]$. 

Let $U$ be a Jordan neighbourhood of $0$ such that $G$ is defined on $U$ and both $G$ and $F$ are injective on $U$. 
Since $F'(0)=1$ and $G'(0)=1$, there is $\delta > 0$ such that $B(0, \delta) \subset U$ and 
\begin{equation}\label{E:pre-petals}
\begin{gathered}
F(W_{att}' \cap B(0, \delta)) \subset W_{att},  \quad F(W_{rep}' \cap B(0, \delta)) \subset W_{rep}, \\
G(W_{att}' \cap B(0, \delta)) \subset W_{att},  \quad G(W_{rep}' \cap B(0, \delta)) \subset W_{rep}.
\end{gathered}
\end{equation}
We may choose $r > 0$ such that 
\begin{equation}\label{E:r-selected}
I^{-1} (\Omega_{att}^r) \subset W_{att}' \cap B(0, \delta), \quad 
I^{-1} (\Omega_{rep}^r) \subset W_{rep}' \cap B(0, \delta).
\end{equation}
Now we may lift $F: W'_{att} \cap B(0, \delta) \to W_{att}$ and $F: W'_{rep} \cap B(0, \delta) \to W_{rep}$ via the change 
of coordinate $I(z)=\zeta$ to define injective holomorphic maps 
\[\tilde{F}_{att}: \Omega_{att}^r \to \C, \quad \text{and} \quad 
\tilde{F}_{rep}: \Omega_{rep}^r \to \C.\]
Straightforward calculations show that $\tilde{F}$ is of the form 
\[\tilde{F}_{att}(\zeta)=\zeta + 1 + O(1/|\zeta|^{1/q}), \quad \tilde{F}_{rep}(\zeta)=\zeta + 1 + O(1/|\zeta|^{1/q}),\]
as $|\zeta| \to +\infty$. 
There is $s>0$ such that, 
\[|\tilde{F}_{att}(\zeta) - (\zeta+1) | \leq 1/4, \quad \forall \zeta \in \Omega_{att}^s,\]
\[|\tilde{F}_{rep}(\zeta) - (\zeta+1) | \leq 1/4, \quad \forall \zeta \in \Omega_{rep}^s.\]
There are injective holomorphic maps 
\[\Phi_{att}: \Omega_{att}^s   \to \C, \quad \Phi_{rep}: \Omega_{rep}^s   \to \C,\]
such that
\[\Phi_{att} \circ \tilde{F}_{att} = \Phi_{att} +1, \quad \text{ on } \Omega_{att}^s, \]
\[\Phi_{rep} \circ \tilde{F}_{rep} = \Phi_{rep} +1, \quad \text{ on } \tilde{F}_{rep}^{-1}(\Omega_{rep}^s).\]
It is known that
\begin{equation}\label{E:estimate-Fatou-coordinate-attracting}
\left| \Phi_{att}(\zeta)/\zeta -1 \right | \to 0, \quad \text{ as } \Re \zeta \to +\infty,
\end{equation}
\begin{equation}\label{E:estimate-Fatou-coordinate-repelling}
\left| \Phi_{rep}(\zeta)/\zeta -1 \right | \to 0, \quad \text{ as } \Re \zeta \to -\infty. 
\end{equation}

Let us define 
\[\mathcal{P}_{att}^s= I^{-1}(\Omega_{att}^s), \quad \mathcal{P}_{rep}^s= I^{-1}(\Omega_{rep}^s).\]
Then, the injective holomorphic maps 
\[\phi_{att}=\Phi_{att} \circ I: \mathcal{P}_{att}^s \to \C, \quad \phi_{rep}=\Phi_{rep} \circ I: \mathcal{P}_{rep}^s \to \C,\]
satisfy
\begin{equation}\label{E:fatou-coordinates}
\begin{gathered}
\phi_{att}  \circ F = \phi_{att}+1,  \quad \text{ on } \mathcal{P}_{att}^s, \\
\phi_{rep} \circ F = \phi_{rep}+1, \quad \text{ on } F^{-1}(\mathcal{P}_{rep}^s).
\end{gathered}
\end{equation}
The map $\phi_{att}$ is an \textit{attracting Fatou coordinate} for $F$, and $\phi_{rep}$ is a \textit{repelling Fatou coordinate} 
for $F$. 

Let 
\[\mu = b_{q+1}/a_{q+1}.\]

\begin{lem}\label{L:translations-on-petals}
There is $t \geq 0$ such that 
\begin{itemize}
\item[(i)]  $G(z)= \phi_{att}^{-1} \circ T_\mu \circ \phi_{att}(z)$, for all $z \in \mathcal{P}_{att}^t$, 
\item[(ii)] $G(z)= \phi_{rep}^{-1} \circ T_\mu \circ \phi_{rep}(z)$, for all $z \in \mathcal{P}_{rep}^t$. 
\end{itemize}
\end{lem}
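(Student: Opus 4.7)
The plan is to transfer the commutation $F\circ G = G\circ F$ through the coordinate changes $I$ and $\Phi_{att}$, in which $F$ becomes translation by $1$, and then exploit $1$-periodicity together with a boundary asymptotic.

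For the first step I would transfer $G$ via $\zeta = I(z) = -1/(qa_{q+1}z^q)$. Substituting the expansion $G(z) = z + b_{q+1}z^{q+1} + O(z^{q+2})$ supplied by Lemma~\ref{L:multiplicity} into $I\circ G\circ I^{-1}$ and expanding gives
\[\tilde G(\zeta) := I \circ G \circ I^{-1}(\zeta) = \zeta + \mu + O(|\zeta|^{-1/q}) \qquad \text{as } |\zeta|\to\infty\]
on $\Omega_{att}^{s'}$ (respectively $\Omega_{rep}^{s'}$), using the branches of $I^{-1}$ fixed before~\eqref{E:r-selected}. The tangency of $G$ to the identity of order $q+1$ guarantees that, for $t$ large enough, $G$ sends $\mathcal{P}_{att}^{t}$ into $W'_{att}\cap B(0,\delta)$ and $\mathcal{P}_{rep}^{t}$ into $W'_{rep}\cap B(0,\delta)$, so that $\tilde G$ is single-valued and the correct branch of $I^{-1}$ is picked up. The identity $F\circ G = G\circ F$ transfers to $\tilde F_{att}\circ \tilde G = \tilde G\circ \tilde F_{att}$ on the intersection of the relevant domains, and analogously in the repelling case.

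For the second step I would conjugate by the Fatou coordinate. Set $\hat G := \Phi_{att}\circ \tilde G \circ \Phi_{att}^{-1}$, defined on a right half-plane $\{\Re\zeta>R\}$ inside the image of $\Phi_{att}$. Then $\hat G\circ T_1 = T_1\circ\hat G$, so $H(\zeta) := \hat G(\zeta) - \zeta$ is holomorphic and $1$-periodic. Combining~\eqref{E:estimate-Fatou-coordinate-attracting} with a Cauchy-estimate derivation that $\Phi_{att}'(\zeta)\to 1$ as $\Re\zeta\to+\infty$, and writing $\eta = \Phi_{att}^{-1}(\zeta)$ so that $\tilde G(\eta) = \eta + \mu + O(|\eta|^{-1/q})$, a mean-value estimate yields
\[\hat G(\zeta) = \Phi_{att}(\eta + \mu + O(|\eta|^{-1/q})) = \Phi_{att}(\eta) + \mu + o(1) = \zeta + \mu + o(1)\]
as $\Re\zeta\to+\infty$. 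Hence $H(\zeta)\to\mu$ along the positive real ray. By $1$-periodicity, $H(x_0) = H(x_0+n)$ for every $n\in\N$ and every $x_0 > R$; letting $n\to\infty$ forces $H(x_0) = \mu$, and the identity principle upgrades this to $H\equiv\mu$, i.e.\ $\hat G = T_\mu$. Unwinding through $\phi_{att} = \Phi_{att}\circ I$ gives part (i) on $\mathcal{P}_{att}^{t}$ for $t$ large. Part (ii) follows by the same argument with $\Phi_{rep}$ and the limit $\Re\zeta\to-\infty$; taking the larger of the two thresholds for $t$ yields the lemma.

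The main technical burden I expect is the bookkeeping of domains and the uniformity of the asymptotics: one must check that $G(\mathcal{P}_{att}^{t})$ stays within the sector where the chosen branch of $I^{-1}$ is single-valued, that the half-plane on which $\hat G$ is defined is wide enough for the periodicity argument to run, and that the standard estimates for $\Phi_{att}$ are uniform enough in $\Im\zeta$ to survive the composition used to pin down $\hat G$ at infinity. None of these points is deep, but they must be tracked carefully to avoid circular choices of $t$.
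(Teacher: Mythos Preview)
Your proof is correct and follows the same outline as the paper: lift $G$ via $I$ to $\tilde G(\zeta)=\zeta+\mu+o(1)$, conjugate by the Fatou coordinate to obtain $\hat G$ commuting with $T_1$, and use the asymptotic at infinity together with $\Phi_{att}'\to 1$ to identify $\hat G=T_\mu$. The only difference is in the final identification step, and it is cosmetic: where you argue that the $1$-periodic function $H=\hat G-\mathrm{id}$ must be constant because it converges to $\mu$ along the positive real ray, the paper instead observes that $\Phi_{att}(\Omega_{att}^t)/\Z=\C/\Z$, so $\hat G$ descends to an injective holomorphic self-map of $\C/\Z$ and is therefore a translation, whose constant is then read off from the same asymptotic.
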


\begin{proof}
By Equations~\eqref{E:pre-petals} and \eqref{E:r-selected}, we may lift $G: W'_{att} \cap B(0, \delta) \to W_{att}$ 
via the change of coordinate $I(z)=\zeta$ to define an injective holomorphic map 
$\tilde{G}_{att}: \Omega_{att}^r \to \C$. We note that $\tilde{G}_{att}$ is of the form 
\[\tilde{G}_{att}(\zeta)= \zeta + \frac{b_{q+1}}{a_{q+1}} + O\left (\frac{1}{|\zeta|^{1/q}}\right ), \text{ as } |\zeta| \to +\infty.\]
In particular, if $|\zeta|$ is large enough, $|\tilde{G}_{att}(\zeta) - (\zeta+ \mu)| \leq 1$. 
This implies that there is $t> s$ such that 
\[\tilde{G}_{att}(\Omega_{att}^t) \subset \Omega_{att}^s.\] 

Let 
\[V= \Phi_{att} (\Omega_{att}^t).\]
Note that since $\tilde{F}_{att}(\Omega_{att}^t) \subset \Omega_{att}^t$, $V+1 \subset V$. 
By Equation~\eqref{E:estimate-Fatou-coordinate-attracting}, if $\Re \zeta$ is large enough, $|\Phi_{att}(\zeta)-\zeta| \leq |\zeta|/3$. 
This implies that 
\[V/\Z= \C/\Z.\]
Consider the injective holomorphic map 
\[\hat{G}_{att}= \Phi_{att} \circ \tilde{G}_{att}  \circ  \Phi_{att}^{-1}: V \to \C.\] 
Since $F$ commutes with $G$ near $0$, $\tilde{F}_{att}$ commutes with $\tilde{G}_{att}$ on the common domain of definition 
$\Omega_{att}^t$. 
Therefore, for $w\in V$, we have 
\begin{align*}
\hat{G}_{att} \circ T_1(w) & =  \Phi_{att} \circ \tilde{G}_{att}  \circ  \Phi_{att}^{-1} \circ T_1(w)\\
&= \Phi_{att} \circ \tilde{G}_{att}  \circ \tilde{F}_{att} \circ \Phi_{att}^{-1}(w)     \\
&= \Phi_{att} \circ \tilde{F}_{att}  \circ \tilde{G}_{att} \circ \Phi_{att}^{-1}(w)     \\
&=T_1 \circ \Phi_{att} \circ \tilde{G}_{att} \circ \Phi_{att}^{-1} (w) = T_1 \circ \hat{G}_{att}(w). 
\end{align*}
Since $V/\Z= \C/\Z$, the above relation implies that $\hat{G}_{att}$ induces a well-defined injective holomorphic map from $\C/\Z$ to 
$\C/\Z$.
Thus, $\hat{G}_{att}$ is a translation on $V/\Z$, and hence, $\hat{G}_{att}$ is a translation on $V$, say $T_\tau$.  
However, since $\Phi_{att}'(\zeta) \to +1$, as $\Re \zeta \to +\infty$, and $\tilde{G}_{att}(\zeta)$ is asymptotically a translation by 
$\mu$ near $+\infty$, we must have $\tau=\mu$. 
That is, $\hat{G}_{att}=T_\mu$. 

For $z\in \mathcal{P}_{att}^t$, we have 
\begin{align*}
\phi_{att}^{-1} \circ T_\mu \circ  \phi_{att}
&= I^{-1} \circ \Phi_{att}^{-1} \circ T_\mu \circ  \Phi_{att} \circ I \\
&= I^{-1} \circ \Phi_{att}^{-1} \circ \hat{G}_{att} \circ  \Phi_{att} \circ I
=I^{-1} \circ \tilde{G}_{att} \circ I= G.
\end{align*}

\bigskip

Part (ii): 
As in the previous part, we may lift $G: W'_{rep} \cap B(0, \delta) \to W_{rep}$ to obtain an injective holomorphic map 
$\tilde{G}_{rep}: \Omega_{rep}^r \to \C$ of the form $\tilde{G}_{rep}= \zeta+ \mu+ o(1)$, as $|\zeta| \to +\infty$. 
Then, one may repeat the argument in part (i) with $\tilde{F}_{rep}$ and $\Phi_{rep}$. 
\end{proof}

Let $B$ denote the set of $z \in \C$ such that $F\co{n}(z) \to 0$, as $n \to +\infty$. 
Evidently, $\mathcal{P}_{att}^s$ is contained in $B$. Let $B_1$ denote the connected component of $B$ which contains 
$\mathcal{P}_{att}^s$. (That is, $B_1$ is the immediate basin of attraction of $0$ in the direction of $v_{att}$.) 
For every $z\in B_1$, there is $k\in \N$ with $F\co{k}(z) \in \mathcal{P}_{att}^s$. 
By the maximum principle, $B_1$ is a simply connected subset of $\C$. 
We may employ the functional relation in Equation~\ref{E:fatou-coordinates}, to extend $\phi_{att}: \mathcal{P}_{att}^s \to \C$ 
to a holomorphic map 
\[\phi_{att}: B_1 \to \C,\]
such that $\phi_{att} \circ F= \phi_{att}+1$ over all of $B_1$. 
 
Consider the trip 
\[\Pi=\{w\in \C \mid  -t-|\mu|-1 < \Re w < -t \} \subset \Omega_{rep}^t.\]
By the estimate in \eqref{E:estimate-Fatou-coordinate-repelling}, if $w\in \Pi$ with $\Im w$ sufficiently large, 
$\Phi_{rep}^{-1}(w) \in \Omega_{att}^s$, and hence $\phi_{rep}(w) \in B_1$. 
On the other hand, for some $w\in \Pi$, $\phi_{rep}(w)$ does not belong to $B_1$. Otherwise, a neighbourhood of $0$ lies 
in $B_1$, which is not possible since $0$ belongs to the Julia set of $F$. 

Let $\Pi'$ denote the connected component of the set $\{w \in \Pi \mid \phi_{rep}^{-1}(w)\in B_1\}$ which contains 
the top end of $\Pi$. 
We may consider the map 
\[h=\phi_{att} \circ \phi_{rep}^{-1}: \Pi' \to \C.\] 
This is a \textit{horn map} of $F$.  
By the functional equations for $\phi_{att}$ and $\phi_{rep}$, we must have $h(\zeta+1)= h(\zeta)+1$, whenever both side of the 
equation are defined. 
Thus, $h$ induces a holomorphic map 
\[H: \Dom H \to \C,\]
on a punctured neighbourhood of $0$ so that $H \circ e^{2\pi i \zeta}= e^{2\pi i h(\zeta)}$. 
By the estimates in \eqref{E:estimate-Fatou-coordinate-attracting} and \eqref{E:estimate-Fatou-coordinate-repelling}, 
$\Im h(\zeta) \to +\infty$, as $\Im \zeta \to +\infty$. 
This implies that $H$ has a removable singularity at $0$. That is $\Dom H$ contains a neighbourhood of $0$. 
\footnote{
The map $H$ is only defined modulo pre-composition and post-composition by linear maps of the form $w \mapsto \lambda w$. 
This is due to the freedom in the choice of $\phi_{att}$ and $\phi_{rep}$ up to post-compositions with translations. 
However, we are not concerned with those choices here.}

\begin{lem}\label{L:unique-critical-value}
The map $H$ has infinitely many critical points, all mapped to the same value. 
\end{lem}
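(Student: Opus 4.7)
\textit{Proof proposal.} The plan is to lift the critical set of $H$ to the Fatou coordinate picture, trace the critical points of the lift back to critical points of $F$ in the immediate basin $B_1$, and then exploit the fact that $Q_{p/q}$ has a single critical point to conclude that all of these collapse to one value on the horn map side.

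First, I would differentiate the defining relation $H(e^{2\pi i \zeta})=e^{2\pi i h(\zeta)}$, where $h=\phi_{att}\circ\phi_{rep}^{-1}$, to get
\[
H'(e^{2\pi i\zeta})=h'(\zeta)\,e^{2\pi i(h(\zeta)-\zeta)}.
\]
This identifies critical points of $H$ in $\Dom H$ with the $\Z$-orbits of critical points of $h$ in $\Pi'$, two such orbits yielding the same critical value of $H$ iff the corresponding $h$-values differ by an integer. Since $\phi_{rep}$ is locally biholomorphic on $\mathcal{P}_{rep}^s$, the chain rule identifies critical points of $h$ in $\Pi'$ with critical points of $\phi_{att}$ lying in $\phi_{rep}^{-1}(\Pi')\subset B_1$.

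Next, differentiating the functional equation $\phi_{att}\circ F=\phi_{att}+1$ (which is valid on all of $B_1$ for the extended Fatou coordinate), we get $\phi_{att}'(F(z))\,F'(z)=\phi_{att}'(z)$, and by iteration
\[
\mathrm{Crit}\bigl(\phi_{att}|_{B_1}\bigr)=\bigcup_{n\geq 0}F^{-n}\bigl(\mathrm{Crit}(F)\cap B_1\bigr),
\]
with $\phi_{att}(z)=\phi_{att}(F^{\circ n}(z))-n$ at any such critical point. The key geometric input is that $\mathrm{Crit}(F)\cap B_1$ is a singleton $\{\tilde c\}$: every critical point of $F=Q_{p/q}^{\circ q}$ is a point $z$ with $Q_{p/q}^{\circ k}(z)=c_Q:=-e^{2\pi i p/q}/2$ for some $k\in\{0,\dots,q-1\}$; since the immediate basins $B_1,\dots,B_q$ are cyclically permuted by $Q_{p/q}$ and $c_Q$ lies in a single basin $B_{i_0}$, the condition $z\in B_1$ forces $1+k\equiv i_0\pmod{q}$, uniquely determining $k$ and hence $z$.

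Consequently, all critical values of $\phi_{att}|_{B_1}$ lie in $\phi_{att}(\tilde c)+\Z$; hence all critical values of $h$ in $\Pi'$ also lie in $\phi_{att}(\tilde c)+\Z$, and after applying $e^{2\pi i\,\cdot}$ they collapse to the single value $e^{2\pi i\phi_{att}(\tilde c)}$ (up to the linear ambiguity of the footnote). For the infinitude of critical points, I would argue that $\phi_{rep}^{-1}(\Pi')$ is an open subset of $B_1$ which meets the parabolic fixed point $0$ along a piece of the repelling direction, and that the backward orbit tree $\bigcup_n F^{-n}(\tilde c)\cap B_1$ has branching factor at least two at each level and accumulates densely on $\partial B_1$. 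Thus infinitely many of its points lie in $\phi_{rep}^{-1}(\Pi')$, and different branches of $F^{-1}$ produce $\phi_{rep}$-values with different non-integer offsets, so the corresponding critical points of $h$ are pairwise inequivalent modulo $\Z$ and yield infinitely many distinct critical points of $H$, all mapped to $e^{2\pi i\phi_{att}(\tilde c)}$.

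The main obstacle is the last density assertion: verifying that infinitely many backward iterates of $\tilde c$ actually land in the specific open set $\phi_{rep}^{-1}(\Pi')$ at pairwise $\Z$-inequivalent $\phi_{rep}$-classes. A natural route is to use the local Leau--Fatou flower: in the repelling Fatou coordinate the inverse branch of $F$ sending points into $\mathcal{P}_{rep}^s$ acts asymptotically as a translation by $-1$, so starting from any element of $\bigcup_n F^{-n}(\tilde c)\cap B_1$ that has entered $\mathcal{P}_{rep}^s$ and pulling it back repeatedly produces sequences accumulating at $0$ along the repelling direction; combined with the tree structure of the backward orbit, this yields the required infinitely many inequivalent critical points.
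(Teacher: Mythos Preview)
Your approach is essentially the paper's own: identify the unique critical point $c_1$ of $F$ in $B_1$, use the functional equation to see that $\mathrm{Crit}(\phi_{att}|_{B_1})=\bigcup_{n\geq 0}F^{-n}(c_1)$ with values in $\phi_{att}(c_1)+\Z$, note that $\phi_{rep}^{-1}$ is conformal, and project via $e^{2\pi i\,\cdot}$. You supply more detail than the paper in two places: the explicit chain-rule reduction from $H$ to $h$ to $\phi_{att}$, and the reason $F$ has a \emph{unique} critical point in $B_1$ (the paper simply asserts this). Your cyclic-permutation argument for uniqueness is correct once you add that $Q_{p/q}^{\circ k}:B_1\to B_{i_0}$ is conformal because none of the intermediate basins contains $c_Q$; this is why ``uniquely determining $k$'' also uniquely determines $z$.

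The ``main obstacle'' you flag---infinitely many $\Z$-inequivalent critical points in $\Pi'$---is a point the paper handles in a single sentence: ``The closure of the set of such points is equal to the boundary of $B_1$.'' This is the standard fact that backward orbits under $F|_{B_1}$ are dense in $\partial B_1$ (since $\partial B_1$ lies in the Julia set). Because $\phi_{rep}^{-1}(\Pi')$ is an open subset of $B_1$ whose closure contains a piece of $\partial B_1$ (near $0$), it meets $\bigcup_n F^{-n}(c_1)$ in infinitely many points; since $F|_{B_1}$ has degree two, distinct branches at each stage yield points not on the same forward $F$-orbit, hence $\Z$-inequivalent under $\phi_{rep}$. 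Your sketch of this via the branching tree is adequate and matches the paper's intent. Note, incidentally, that the subsequent application (Lemma~\ref{L:mu-integer}) only uses the existence of one critical point and the uniqueness of the critical value, so the infinitude claim, while true, is not load-bearing.
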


\begin{proof}
Let $c_1$ denoted the unique critical point of $F$ within $B_1$. 
The map $\phi_{att}$ has a simple critical point at $c_1$. It follows from Equation~\eqref{E:fatou-coordinates} that any 
$z\in B_1$ which is mapped to $c_1$ under some iterate of $F$ is a critical point of $\phi_{att}$. 
The closure of the set of such points is equal to the boundary of $B_1$. 

On the other hand, by Equation~\eqref{E:fatou-coordinates}, those critical points are mapped to 
$\phi_{att}(c_1)$, $\phi_{att}(c_1)-1$, $\phi_{att}(c_1)-2$, \dots. 
Since $\phi_{rep}^{-1}$ is conformal on $\Pi' \subset \Omega_{rep}^t$, we conclude that the only critical values of $h$ are 
at $\phi_{att}(c_1), \phi_{att}(c_1)-1, \phi_{att}(c_1)-2, \dots$. 
All those points project to the same value in $\C/\Z$. 
\end{proof}

\begin{lem}\label{L:H-symmetry}
The map $H$ commutes with $\xi \mapsto e^{2\pi i \mu} \xi$ near $0$. 
\end{lem}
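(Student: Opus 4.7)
The plan is to lift the desired commutation for $H$ to a translation identity for the horn map $h=\phi_{att}\circ\phi_{rep}^{-1}$, and then read it off by combining both parts of Lemma~\ref{L:translations-on-petals}. Since $H\circ e^{2\pi i\zeta}=e^{2\pi i h(\zeta)}$, the statement $H(e^{2\pi i\mu}\xi)=e^{2\pi i\mu}H(\xi)$ near $\xi=0$ is equivalent to
\[h(\zeta+\mu)\equiv h(\zeta)+\mu\pmod{\Z}\]
for every $\zeta$ in the upper end of $\Pi'$, i.e.\ the part of $\Pi'$ that projects under $\xi=e^{2\pi i\zeta}$ onto a punctured neighbourhood of $0$.

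First I would restrict attention to such $\zeta$ with $\Im\zeta$ large enough that $z:=\phi_{rep}^{-1}(\zeta)$ lies in the overlap $\mathcal{P}_{att}^t\cap\mathcal{P}_{rep}^t$ of the attracting and repelling petals, enlarging $t$ from Lemma~\ref{L:translations-on-petals} if necessary. This is legitimate by the asymptotic estimate~\eqref{E:estimate-Fatou-coordinate-repelling}, which forces $\phi_{rep}^{-1}(\zeta)$ to track $I^{-1}(\zeta)$ for large $\Im\zeta$ and thus to lie deep inside the attracting petal, exactly as used already for the definition of $\Pi'$. Since $\Im(\zeta+\mu)$ is then also large, $\phi_{rep}^{-1}(\zeta+\mu)$ is defined in $\mathcal{P}_{rep}^s$ as well.

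The core of the argument is then a one-line computation chaining the two conjugacies. Part~(ii) of Lemma~\ref{L:translations-on-petals}, applied at $z\in\mathcal{P}_{rep}^t$, gives
\[\phi_{rep}^{-1}(\zeta+\mu)=\phi_{rep}^{-1}\circ T_\mu\circ\phi_{rep}(z)=G(z),\]
while part~(i), applied at $z\in\mathcal{P}_{att}^t$, produces
\[\phi_{att}(G(z))=T_\mu\circ\phi_{att}(z)=\phi_{att}(z)+\mu,\]
and incidentally places $G(z)$ in $\mathcal{P}_{att}^s\subset B_1$, which is what makes $\phi_{att}(G(z))$ meaningful. Combining,
\[h(\zeta+\mu)=\phi_{att}\bigl(\phi_{rep}^{-1}(\zeta+\mu)\bigr)=\phi_{att}(G(z))=\phi_{att}(z)+\mu=h(\zeta)+\mu,\]
which is stronger than the congruence mod $\Z$ that we need. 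Descending via $\xi=e^{2\pi i\zeta}$ yields $H(e^{2\pi i\mu}\xi)=e^{2\pi i\mu}H(\xi)$ on a punctured neighbourhood of $0$; because $H$ extends holomorphically across $0$ with $H(0)=0$, the identity propagates to a full neighbourhood by continuity.

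The only real obstacle is the bookkeeping required to land $z=\phi_{rep}^{-1}(\zeta)$ in $\mathcal{P}_{att}^t\cap\mathcal{P}_{rep}^t$ simultaneously, so that both parts of Lemma~\ref{L:translations-on-petals} fire at the same point. That is absorbed by restricting $\Im\zeta$ to be large enough and, if needed, enlarging $t$; once that is done, everything else is a direct calculation from the lemma and the estimates for the Fatou coordinates.
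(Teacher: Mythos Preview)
Your proof is correct and follows essentially the same route as the paper's: both equate the two expressions for $G$ from Lemma~\ref{L:translations-on-petals} on the overlap of the petals to obtain $T_\mu\circ h=h\circ T_\mu$, then descend via $\xi=e^{2\pi i\zeta}$. The only difference is cosmetic: the paper writes the identity as an equality of compositions $\phi_{att}^{-1}\circ T_\mu\circ\phi_{att}=\phi_{rep}^{-1}\circ T_\mu\circ\phi_{rep}$ and rearranges, whereas you chase a single point $z=\phi_{rep}^{-1}(\zeta)$ through the two conjugacies; your domain bookkeeping (going to large $\Im\zeta$) is likewise equivalent to the paper's observation that $T_\mu^{-1}(\Pi')\cap\Pi'$ is nonempty.
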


\begin{proof}
By Lemma~\ref{L:translations-on-petals}, $G=\phi_{att}^{-1} \circ T_{\mu} \circ \phi_{att}$ on $\mathcal{P}_{att}^t$, and 
$G = \phi_{rep}^{-1} \circ T_{\mu}\circ  \phi_{rep}$ on $\mathcal{P}_{rep}^t$. 
Thus, 
\[\phi_{att}^{-1} \circ T_{\mu} \circ \phi_{att}= \phi_{rep}^{-1} \circ T_{\mu}\circ  \phi_{rep},\]
at any point in $\mathcal{P}_{att}^t \cap \mathcal{P}_{rep}^t$ where both sides of the equation are defined. 
Equivalently, 
\[T_{\mu} \circ \phi_{att} \circ \phi_{rep}^{-1}= \phi_{att} \circ \phi_{rep}^{-1} \circ T_{\mu},\]
whenever both sides of the equation are defined. 
We note that $T_\mu^{-1}(\Pi') \cap \Pi'$ is a non-empty open set, where both sides of the above equation are defined. 
This implies that the horn map $h$ commutes with $T_{\mu}$. 
Hence, $H$ commutes with the map $\xi \mapsto e^{2\pi i \mu} \xi$.
\end{proof}

\begin{lem}\label{L:mu-integer}
We have $\mu \in \Z$. 
\end{lem}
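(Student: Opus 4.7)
The plan is to combine the rotational symmetry of $H$ established in Lemma~\ref{L:H-symmetry} with the rigid critical structure from Lemma~\ref{L:unique-critical-value}. Let $R_\mu$ denote the rotation $\xi \mapsto e^{2\pi i \mu}\xi$, so that $H \circ R_\mu = R_\mu \circ H$ on some punctured neighbourhood of $0$.

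First I would differentiate the relation $H(R_\mu(\xi)) = R_\mu(H(\xi))$, which (since $R_\mu' \equiv e^{2\pi i \mu} \neq 0$) shows that $\xi_0$ is a critical point of $H$ if and only if $R_\mu(\xi_0)$ is a critical point of $H$. Moreover, if $\xi_0$ is critical with critical value $v = H(\xi_0)$, then the relation forces $H(R_\mu(\xi_0)) = R_\mu(v) = e^{2\pi i \mu} v$. Thus $R_\mu$ permutes the set of critical values of $H$ via multiplication by $e^{2\pi i \mu}$.

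Next I would invoke Lemma~\ref{L:unique-critical-value}: there is a single critical value $v$, so the previous step collapses to the identity $e^{2\pi i \mu} v = v$, giving the dichotomy $v = 0$ or $\mu \in \Z$. Ruling out $v = 0$ is the only delicate point, and I would do it by reading off the explicit formula coming from the construction: the critical values of $h = \phi_{att} \circ \phi_{rep}^{-1}$ are, by the proof of Lemma~\ref{L:unique-critical-value}, the finite complex numbers $\phi_{att}(c_1) - n$ for $n \geq 0$, so after the substitution $\xi = e^{2\pi i \zeta}$ the unique critical value of $H$ is
\[
v = e^{2\pi i \phi_{att}(c_1)} \in \C \setminus \{0\}.
\]
(If one prefers a more intrinsic argument, one could instead note that Lemma~\ref{L:unique-critical-value} produces infinitely many critical points of $H$ in the domain of definition; should they all map to $0$, the identity theorem applied to the holomorphic extension of $H$ to a neighbourhood of $0$ would force $H \equiv 0$, contradicting that $H$ is a well-defined nontrivial horn map.) Either way $v \neq 0$, and the dichotomy collapses to $e^{2\pi i \mu} = 1$, i.e.\ $\mu \in \Z$.

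The main obstacle is really the verification that $v \neq 0$; the rest is a clean consequence of the commutation and the uniqueness of the critical value. I should also mention that the footnote's ambiguity of $H$ up to conjugation by $w \mapsto \lambda w$ is harmless here: such conjugation commutes with $R_\mu$ and multiplies $v$ by a nonzero scalar, so the conclusion $e^{2\pi i \mu} v = v$, and hence $\mu \in \Z$, is preserved.
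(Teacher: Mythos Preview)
Your proof is correct and follows essentially the same approach as the paper: differentiate the commutation relation to transport critical points and critical values under $R_\mu$, then invoke the uniqueness of the critical value from Lemma~\ref{L:unique-critical-value} to force $e^{2\pi i\mu}v = v$ with $v \neq 0$. The one point the paper treats more carefully is the domain issue---since the commutation $H\circ R_\mu = R_\mu\circ H$ is a priori only established near $0$ while the critical points of $H$ need not lie there, the paper first extends $H$ so that $\Dom H$ is $R_\mu$-invariant (defining $H(\xi)=e^{2\pi i\mu}H(e^{-2\pi i\mu}\xi)$ on $e^{2\pi i\mu}\cdot\Dom H$), after which your argument goes through verbatim.
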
 

\begin{proof}
First note that $\Dom H$ is invariant under multiplication by $e^{2\pi i \mu}$. 
That is, on the set $e^{2\pi i \mu} \cdot \Dom H$ we may define $H$ as $\xi \mapsto e^{2\pi i \mu} H (e^{-2\pi i \mu} \xi)$. 
This matches $H$ on $(e^{2\pi i \mu} \cdot \Dom H) \cap \Dom H$. 

Let $c$ denote a critical point of $H$. Differentiating $H (e^{2\pi i \mu} \xi) = e^{2\pi i \mu} H(\xi)$ at $c$, we note that 
$e^{2\pi i \mu} c$ is a critical point of $H$. 
However, $H(e^{2\pi i \mu} c)= e^{2\pi i \mu} H(c)$ is a critical value of $H$. 
By Lemma~\ref{L:unique-critical-value}, we must have $H(c) = e^{2\pi i \mu} H(c)$, which using $H(c) \neq 0$, 
we conclude that $\mu \in \Z$. 
\end{proof}

\begin{proof}[Proof of Theorem~\ref{T:parabolic}]
By Lemma \ref{L:translations-on-petals}, $G = \phi_{att}^{-1} \circ T_\mu \circ \phi_{att}$ on $\mathcal{P}_{att}^t$, 
and by Lemma~\ref{L:mu-integer}, $\mu$ is an integer. 
Thus, on $\mathcal{P}_{att}^t$, 
\[G=\phi_{att}^{-1} \circ T_1\co{\mu} \circ \phi_{att}
= (\phi_{att}^{-1} \circ T_1 \circ \phi_{att}) \circ (\phi_{att}^{-1} \circ T_1 \circ \phi_{att}) \circ \dots 
\circ (\phi_{att}^{-1} \circ T_1 \circ \phi_{att})= F\co{\mu}.\]
As $\mathcal{P}_{att}^t$ is a non-empty open set, we must have $G=F\co{\mu}$ on a neighbourhood of $0$. 

Looking back at definitions~\eqref{E:F-defn} and \eqref{E:G-defn}, we conclude that 
$(Q_{p/q}\co{q})\co{\mu} = Q_{p/q}\co{j} \circ g$, on a neighbourhood of $0$, for some $0 \leq j \leq q-1$. 
Thus, $g=Q_{p/q}\co{(q \mu -j)}$ near $0$. 
\end{proof}

\section{Elliptic case}
Let $g(z)= \sum_{k=1}^\infty g_k z^k \in \cent(Q_\alpha)$. 
It is easy to see that $|g_1|=1$.
Let us say that $g$ is $r$-\textit{good}, if $|g_k| \leq r^{1-k}$ for all $k \geq 1$.
Note that if $g$ is $r$-good, then it is defined and holomorphic on the disk $|z| < r$. 

\begin{lem} \label{2}
For every $p/q \in \mathbb{Q}$ and every $r>0$, $Q_{p/q}\co{k}$ is $r$-good for only finitely many values of $k \in \Z$.
\end{lem}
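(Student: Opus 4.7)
The plan is to extract one Taylor coefficient of $Q_{p/q}\co{k}$ that depends linearly (and nondegenerately) on $k$, and to show that the $r$-goodness bound forces this coefficient to be bounded. The coefficient in question is that of $z^{q+1}$.

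First I would recall from \eqref{E:F-defn} that $F = Q_{p/q}\co{q}$ has the expansion $F(z) = z + a_{q+1} z^{q+1} + O(z^{q+2})$ with $a_{q+1} \neq 0$. A direct induction on $|m|$, valid for every $m \in \Z$ (using $F\circ F\co{m-1} = F\co{m}$ for $m \geq 1$ and the local inverse for $m \leq 0$), gives
\[
F\co{m}(z) = z + m\, a_{q+1}\, z^{q+1} + O(z^{q+2}).
\]

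Next, for arbitrary $k \in \Z$ I would write $k = mq + j$ with $0 \leq j \leq q-1$, so that $Q_{p/q}\co{k} = F\co{m} \circ Q_{p/q}\co{j}$. Set $\lambda_j = e^{2\pi i j p/q}$, of modulus $1$, and expand $Q_{p/q}\co{j}(z) = \lambda_j z + c_{j,2} z^2 + \dots + c_{j,q+1} z^{q+1} + O(z^{q+2})$. In the substitution $F\co{m}(Q_{p/q}\co{j}(z))$, contributions to the coefficient of $z^{q+1}$ come only from the linear piece $Q_{p/q}\co{j}(z)$ and from $m a_{q+1}\, (Q_{p/q}\co{j}(z))^{q+1}$, since every remaining term of $F\co{m}$ is of the form (constant)$\cdot w^N$ with $N \geq q+2$ and, after substitution, contributes only at degrees $\geq N \geq q+2$. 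One finds that the coefficient of $z^{q+1}$ in $Q_{p/q}\co{k}$ is
\[
c_{j,q+1} + m\, a_{q+1}\, \lambda_j^{q+1},
\]
an affine function of $m$ with slope of modulus $|a_{q+1}| > 0$ (using $|\lambda_j| = 1$).

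Finally, $r$-goodness of $Q_{p/q}\co{k}$ forces $|c_{j,q+1} + m\, a_{q+1}\, \lambda_j^{q+1}| \leq r^{-q}$, which gives an explicit bound on $|m|$ depending only on $j$, $r$, $|c_{j,q+1}|$, and $|a_{q+1}|$. Since $j$ ranges over the finite set $\{0, 1, \dots, q-1\}$, only finitely many $k = mq + j$ can be $r$-good.

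The only mild obstacle is the bookkeeping in the substitution step: one must verify that none of the intermediate-degree coefficients $c_{j,2}, \dots, c_{j,q}$ of $Q_{p/q}\co{j}$ leak into the $z^{q+1}$ coefficient of $F\co{m} \circ Q_{p/q}\co{j}$ through higher-order terms of $F\co{m}$. This is resolved in one line by the degree estimate $(Q_{p/q}\co{j}(z))^N = O(z^N)$, after which the argument reduces to a single affine inequality. No dynamics beyond the parabolic form of $F$ is needed.
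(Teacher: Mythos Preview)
Your argument is correct. The induction giving $F\co{m}(z)=z+m\,a_{q+1}z^{q+1}+O(z^{q+2})$ works for all $m\in\Z$ since $F'(0)=1$, and the degree count $(Q_{p/q}\co{j}(z))^N=O(z^N)$ really does kill any contribution from the $O(w^{q+2})$ terms of $F\co{m}$, regardless of how those coefficients depend on $m$. The resulting affine bound on $m$ for each residue $j$ is sound.

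This is, however, a genuinely different route from the paper's. The paper argues in one line: the set of $r$-good maps is a normal family on $|z|<r$, so infinitely many $r$-good iterates would yield a locally uniformly convergent subsequence of $\{Q_{p/q}\co{k}\}$ near $0$, contradicting the standard fact that iterates (forward or backward) near a parabolic fixed point are never normal. Your approach trades this dynamical input for an explicit Taylor computation; it is more elementary and yields an effective bound on $|k|$, at the cost of a little algebra. The paper's approach is shorter and transfers verbatim to the circle-map analogue (Lemma~\ref{2'}), whereas your method would need to be redone in that setting with the local expansion of $F_{a,b}$ at a parabolic periodic point on $\mathbb{T}$.
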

\begin{proof}
As $Q_{p/q}$ has a parabolic fixed point at $0$, the family of iterates $\{Q_{p/q}\co{k}\}_{k\geq 0}$ and 
$\{Q_{p/q}\co{-k}\}_{k\geq 0}$ have no uniformly convergent subsequence on any neighbourhood of $0$. 
\end{proof}

We let 
\[K(p/q,r) = \left \{k\in \mathbb{Z}\; ; \; Q_{p/q}\co{k} \text{ is $r$-good}\right \}.\]
By the above lemma, $K(p/q,r)$ is a finite set.

\begin{lem} \label{3}
For every $p/q \in \mathbb{Q}$ and every $r>0$, there exists $\delta(p/q,r)>0$ such that for every $p'/q' \in \mathbb{Q}$ with 
$|p'/q'-p/q| \leq \delta(p/q,r)$ we have $K(p'/q',r) \subseteq K(p/q,r)$. 
\end{lem}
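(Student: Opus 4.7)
The plan is to argue by contradiction using normality and Theorem~\ref{T:parabolic}. Since each Taylor coefficient of $Q_\alpha^{\circ k}$ is a polynomial in $e^{2\pi i \alpha}$, the set $\{\alpha : k \in K(\alpha, r)\}$ is closed for every fixed $k$, and consequently each individual $k \notin K(p/q, r)$ is excluded on some neighborhood of $p/q$. To combine these exclusions uniformly it suffices to establish a uniform boundedness statement: there exist $N \in \N$ and a neighborhood $U$ of $p/q$ such that $K(\alpha', r) \subseteq [-N, N]$ for all $\alpha' \in U$. Given this, the lemma follows by intersecting $U$ with the (finitely many) neighborhoods excluding the integers $k \in [-N, N] \setminus K(p/q, r)$.

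I would prove this uniform bound by contradiction. Suppose there exist $\alpha_n = p_n/q_n \to p/q$ and $k_n \in K(\alpha_n, r)$ with $|k_n| \to \infty$. The bound $|g_j| \le r^{1-j}$ implies $|g(z)| \le |z|/(1 - |z|/r)$ on $\{|z| < r\}$, so the family of $r$-good maps is normal there. Extracting a subsequence, $Q_{\alpha_n}^{\circ k_n}$ converges uniformly on compact subsets of $\{|z| < r\}$ to some $r$-good $G$. The derivative $G'(0) = \lim e^{2\pi i k_n \alpha_n}$ has modulus one, so $G$ is a biholomorphic germ at $0$. Passing to the limit in $Q_{\alpha_n} \circ Q_{\alpha_n}^{\circ k_n} = Q_{\alpha_n}^{\circ k_n} \circ Q_{\alpha_n}$, using that $Q_{\alpha_n} \to Q_{p/q}$ uniformly on compact subsets of $\C$, shows $G \in \cent(Q_{p/q})$. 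By Theorem~\ref{T:parabolic}, $G = Q_{p/q}^{\circ m}$ for some integer $m$, and $r$-goodness of $G$ forces $m \in K(p/q, r)$.

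The remaining task is to derive a contradiction from $Q_{\alpha_n}^{\circ k_n} \to Q_{p/q}^{\circ m}$ on $\{|z| < r\}$ with $|k_n| \to \infty$ and $m$ fixed; this is the main obstacle. Composing with the inverse germ $Q_{\alpha_n}^{\circ (-m)}$, which converges to $Q_{p/q}^{\circ (-m)}$ near $0$, reduces the problem to showing that the germ $Q_{\alpha_n}^{\circ (k_n - m)}$ cannot tend to the identity with $|k_n - m| \to \infty$. I would seek the obstruction in the parabolic structure of $Q_{p/q}$: the leading coefficient $a_{q+1}$ of $Q_{p/q}^{\circ q} - z$ is nonzero by the analysis of Section~2, and combined with the arithmetic constraint $e^{2\pi i(k_n - m)\alpha_n} \to 1$ forced by convergence of derivatives at $0$, a careful analysis of the Taylor coefficients of $Q_\alpha^{\circ \ell}$ via the recursion $[z^j]Q_\alpha^{\circ(\ell+1)} = e^{2\pi i\alpha}[z^j]Q_\alpha^{\circ\ell} + [z^j]\bigl(Q_\alpha^{\circ\ell}\bigr)^2$ should show that a specific Taylor coefficient of $Q_{\alpha_n}^{\circ (k_n - m)}$ cannot tend to $0$, contradicting convergence to the identity. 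The delicate point, and the main technical obstacle, is the bookkeeping of the residue of $k_n - m$ modulo the denominator $q_n$ of $\alpha_n$, which governs precisely which coefficient provides the obstruction and how the parabolic leading term $a_{q+1}$ enters the asymptotics.
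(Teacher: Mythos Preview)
Your reduction to a uniform bound $K(\alpha',r)\subseteq[-N,N]$ for $\alpha'$ near $p/q$ is correct, and your compactness argument together with Theorem~\ref{T:parabolic} does yield $Q_{\alpha_n}^{\circ k_n}\to Q_{p/q}^{\circ m}$ on $\{|z|<r\}$. The gap is exactly where you flag it: ruling out $Q_{\alpha_n}^{\circ\ell_n}\to\mathrm{id}$ as germs with $|\ell_n|\to\infty$. Your proposed Taylor-coefficient recursion is the hard road here --- the interaction between the residue of $\ell_n\bmod q_n$ and the rate at which $\alpha_n\to p/q$ is precisely the parabolic-implosion regime, and isolating a single coefficient that obstructs convergence is genuinely delicate, not mere bookkeeping. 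As written, this step is not done.

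The paper sidesteps all of this with a direct, elementary argument that makes no use of Theorem~\ref{T:parabolic}. The key is to count critical points rather than Taylor coefficients. By compactness of the $r$-good class, there is $N(r)$ such that every $r$-good map has fewer than $N(r)$ critical points in $\{|z|<r/2\}$. Since the critical points of $Q_{p/q}^{\circ L}$ accumulate on $0$ as $L\to+\infty$, one can choose $L$ so that $Q_{p/q}^{\circ L}$ already has $N(r)$ critical points in $\{|z|<r/2\}$; this persists for $p'/q'$ close to $p/q$, so no $k\ge L$ lies in $K(p'/q',r)$. For negative $k$ one uses instead that $Q_{p/q}^{\circ(-M)}$ fails to extend to $\{|z|<r\}$ for large $M$, which is again stable under perturbation. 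The finitely many remaining $k\in[-M,L]\setminus K(p/q,r)$ are then handled by the continuity observation you already made. Incidentally, the same critical-point idea would also close your gap (if $Q_{\alpha_n}^{\circ\ell_n}\to\mathrm{id}$ on a neighbourhood of $0$ with $\ell_n\to+\infty$, the derivatives converge to $1$ there, contradicting the presence of nearby critical points), but once one has that observation the detour through Theorem~\ref{T:parabolic} is unnecessary.
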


\begin{proof}
By the compactness of the set of $r$-good holomorphic maps, there is $N(r)$ such that any $r$-good map has less than 
$N(r)$ critical points is the disk $|z| < r/2$.

As $L$ tends to $+\infty$, the set of the critical points of $Q_{p/q}\co{L}$ increases, and accumulates on $0$.
Let $L \in \N$ be such that $Q_{p/q}\co{L}$ has at least $N(r)$ critical points in the open disk $|z| < r/2$. 
If $p'/q'$ is close enough to $p/q$, then $Q_{p'/q'}\co{L}$ has at least $N(R)$ critical points in the open disk $|z| < r/2$.  
For $l \geq L$, $Q_{p'/q'}\co{l}$ has at least all those critical points, so it is not $r$-good.

Let $M \in \N$ be such that $Q_{p/q}\co{-M}$, and hence $Q_{p/q}\co{-m}$ for any $m \geq M$, does not extend to the open 
disk $|z| <r$. 
Then, the same is true for $p'/q'$ close to $p/q$.

Finally, if $k \notin K(p/q,r)$ and $-M \leq k \leq L$, $Q_{p'/q'}\co{k}$ may not be $r$-good if $p'/q'$ is too close to $p/q$,
because otherwise one could take limits to conclude that $Q_{p/q} \co{k}$ is $r$-good.
\end{proof}

\begin{lem} \label{4}
For every $p/q \in \mathbb{Q}$, every $r>0$, and every $\epsilon>0$, there exists $\kappa(p/q,r,\epsilon)>0$ which satisfies 
the following. 
For every $\alpha \in \R \setminus \Q$ with $|\alpha-p/q| \leq \kappa(p/q,r,\epsilon)$, and every $g(z)=e^{2 \pi i \beta} z+O(z^2)$ 
which commutes with $Q_\alpha$ and is $r$-good, there exists $k \in K(p/q,r)$ such that 
$|\beta-k p/q|<\epsilon \mod \mathbb{Z}$.
\end{lem}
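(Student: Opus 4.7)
The plan is to argue by contradiction, using normal-family compactness of the $r$-good maps together with Theorem~\ref{T:parabolic}, which identifies $\cent(Q_{p/q})$ with the cyclic group of iterates of $Q_{p/q}$.

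Assume the conclusion fails. Then there exist $\epsilon_0 > 0$, a sequence $\alpha_n \in \R \setminus \Q$ with $\alpha_n \to p/q$, and $r$-good maps $g_n(z) = e^{2\pi i \beta_n} z + O(z^2)$ commuting with $Q_{\alpha_n}$, such that for every $k \in K(p/q, r)$ and every $n$,
\[ |\beta_n - k p/q| \geq \epsilon_0 \pmod{\Z}. \]
The coefficient bounds $|g_{n,k}| \leq r^{1-k}$ implicit in $r$-goodness make $\{g_n\}$ uniformly bounded on every compact subset of $\{|z|<r\}$, hence a normal family on that disc. After passing to a subsequence I would arrange $g_n \to g$ locally uniformly on $\{|z|<r\}$, with $g$ still $r$-good since the coefficient bounds are closed under limits, and also $\beta_n \to \beta$ in $\R/\Z$, so that $|\beta - kp/q| \geq \epsilon_0$ mod $\Z$ for every $k \in K(p/q, r)$.

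Next I would pass the commutation $g_n \circ Q_{\alpha_n} = Q_{\alpha_n} \circ g_n$ to the limit. Because $Q_{\alpha_n} \to Q_{p/q}$ locally uniformly and each $g_n$ is defined on all of $\{|z|<r\}$, a fixed small disc around $0$ serves as a common domain of definition for both sides once $n$ is large. Uniform convergence then yields $g \circ Q_{p/q} = Q_{p/q} \circ g$ on that disc, so $g \in \cent(Q_{p/q})$. By Theorem~\ref{T:parabolic}, $g = Q_{p/q}\co{k}$ near $0$ for some $k \in \Z$, and since $g$ is $r$-good, this $k$ belongs to $K(p/q, r)$ by the very definition of that set.

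Finally, uniform convergence of $g_n$ to $g$ on a neighbourhood of $0$ gives $g_n'(0) \to g'(0)$, that is, $e^{2\pi i \beta_n} \to e^{2\pi i k p/q}$, whence $\beta_n - k p/q \to 0$ mod $\Z$; this contradicts the standing assumption for that particular $k \in K(p/q, r)$. I expect the main conceptual step to be identifying the subsequential limit $g$ as a member of $\cent(Q_{p/q})$ and invoking Theorem~\ref{T:parabolic} to collapse it to an iterate; the peripheral bookkeeping—normality, passage of Taylor coefficients to the limit, and the fixed common domain for the commutation—is routine once $r$-goodness is in hand.
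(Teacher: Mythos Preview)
Your argument is correct and follows essentially the same route as the paper: a contradiction via compactness of the $r$-good class, passing the commutation relation to the limit, and invoking Theorem~\ref{T:parabolic} (together with the definition of $K(p/q,r)$) to force the limit to be an iterate whose multiplier contradicts the assumed separation of $\beta_n$ from $kp/q$. The paper's proof is simply a terser version of yours, citing ``compactness of the set of $r$-good maps'' and Lemma~\ref{2} where you spell out the normal-family reasoning and the convergence $g_n'(0)\to g'(0)$.
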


\begin{proof}
If the result does not hold, we may take a sequence $\alpha_n \to p/q$ and $r$-good maps $g_n(z)=e^{2 \pi i \beta_n} z+O(z^2)$ 
which commute with $Q_{\alpha_n}$. 
By the compactness of the set of $r$-good maps, we may choose a convergent subsequence of the $g_n$ converging to a limit $g$
which is $r$-good and commutes with $Q_{p/q}$. 
Then, $g$ will not be of the form $Q_{p/q} \co{k}$ for some $k \in K(p/q,r)$. 
This contradicts Theorem~\ref{T:parabolic} and Lemma~\ref{2}. 
\end{proof}

\begin{lem} \label{5}
For every $\alpha \in \R \setminus \Q$, if a holomorphic germ of the form $g(z)=e^{2 \pi i k \alpha} z+O(z^2)$, for some 
$k \in \Z$, commutes with $Q_\alpha$, then $g=Q_\alpha\co{k}$ near $0$.
\end{lem}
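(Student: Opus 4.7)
The plan is to reduce the statement to showing that the only holomorphic germ commuting with $Q_\alpha$ whose derivative at $0$ equals $1$ is the identity, and then verify this by a direct Taylor coefficient comparison exploiting the polynomial form of $Q_\alpha$ together with the irrationality of $\alpha$.

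First I would form $h = g \circ Q_\alpha^{\circ(-k)}$. Since $Q_\alpha'(0) = e^{2\pi i \alpha} \neq 0$, $Q_\alpha$ is a local biholomorphism at $0$, so $h$ is a well-defined holomorphic germ. Because $g$ and every iterate $Q_\alpha^{\circ k}$ commute with $Q_\alpha$, so does $h$, and the hypothesis on the multiplier of $g$ gives $h'(0) = e^{2\pi i k \alpha} \cdot e^{-2\pi i k\alpha} = 1$. It therefore suffices to prove that any holomorphic germ $h$ at $0$ with $h'(0) = 1$ commuting with $Q_\alpha$ is the identity near $0$.

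Suppose for contradiction that this fails and write $h(z) = z + c_n z^n + O(z^{n+1})$ with $n \geq 2$ minimal and $c_n \neq 0$. Setting $\lambda = e^{2\pi i \alpha}$ and using $Q_\alpha(z) = \lambda z + z^2$, I would expand both sides of $h \circ Q_\alpha = Q_\alpha \circ h$ and read off the $z^n$ coefficient. On the left, the $z^n$ coefficient of $h(\lambda z + z^2)$ is $c_n \lambda^n$, coming from the leading term of $(\lambda z + z^2)^n$ (together with an additional $1$ when $n = 2$, contributed by the $z^2$ part of $Q_\alpha$ itself). On the right, expanding $\lambda h(z) + h(z)^2$ yields $\lambda c_n$ as the $z^n$ coefficient (together with the same extra $1$ when $n = 2$, this time coming from $h(z)^2$). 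After cancellation the comparison reduces in all cases to
\[c_n \, \lambda \, (\lambda^{n-1} - 1) = 0.\]
Since $\alpha \in \R \setminus \Q$ and $n - 1 \geq 1$, we have $\lambda^{n-1} \neq 1$, so $c_n = 0$, contradicting the choice of $n$. Hence $h$ is the identity germ and $g = Q_\alpha^{\circ k}$ near $0$.

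There is no serious obstacle in this plan: once one notices the correct reduction, it is a one-line coefficient computation, and no theory of Fatou coordinates or hedgehogs is invoked (in contrast to the parabolic case). The only subtlety worth flagging is that the $n = 2$ case must be handled with a touch of care because the quadratic term of $Q_\alpha$ contributes to both sides of the identity; however those extra contributions cancel and yield exactly the same equation as the generic $n \geq 3$ case, so the conclusion is uniform.
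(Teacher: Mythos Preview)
Your proof is correct and follows essentially the same route as the paper: reduce to the case of multiplier $1$ by composing with $Q_\alpha^{\circ(-k)}$, then kill all higher Taylor coefficients by a direct coefficient comparison exploiting $\lambda^{n-1}\neq 1$ for irrational $\alpha$. The paper's proof is terse (it simply says ``by an inductive argument, one may show that the coefficients \dots\ must be $0$''), and your write-up supplies exactly those details, including the careful handling of the $n=2$ case.
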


\begin{proof}
By considering $Q_\alpha\co{-k} \circ g$ instead, we may assume that $k=0$. 
Then, by an inductive argument, one may show that the coefficients of the Taylor series expansion of $g$, except the first term, must 
be $0$. That is, $g(z)=z$. 
\end{proof}

\begin{proof}[proof of Theorem~\ref{T:elliptic}]
Start with any rational number $p_1/q_1$. 
We inductively define a strictly increasing sequence of rational numbers $p_n/q_n$, for $n\geq 1$, so that for all $1 \leq l \leq j<n$ 
we have
\begin{equation} \label {6}
|p_n/q_n-p_j/q_j|<\delta(p_j/q_j,1/j),
\end{equation}
\begin{equation} \label {7}
|p_n/q_n-p_j/q_j|<\kappa(p_j/q_j,1/l,1/j),
\end{equation}
\begin{equation} \label {8}
|p_n/q_n-p_j/q_j|<1/q_j^2.
\end{equation}

Let $\alpha=\lim_{n\to \infty} p_n/q_n$.
Since the sequence $p_n/q_n$ is strictly increasing, it follows from Equation~\eqref{8} that $q_n \to \infty$, as $n\to \infty$, 
and $\alpha \in \R \setminus \Q$.

Taking limit as $n\to \infty$ in Equation~\eqref{7}, we note that $|\alpha-p_j/q_j| \leq \kappa(p_j/q_j,1/l,1/j)$, for every 
$1 \leq l \leq j$.

Assume that $g(z)=e^{2 \pi i \beta} z+O(z^2)$ is a germ of a holomorphic map which commutes with $Q_\alpha$. 
There is $l\geq 1$ such that $g$ is $1/l$-good.

By Equation~\eqref{6} and Lemma~\ref{3}, we obtain $K(p_j/q_j,1/l) \subseteq K(p_l/q_l,1/l)$, for $1 \leq l \leq j$.

By Lemma~\ref{4}, for every $j \geq l$, there exists $k \in \Z$ with $k \in K(p_j/q_j,1/l) \subseteq  K(p_l/q_l,1/l)$ such that 
$|\beta-k p_j/q_j|<1/j \mod \Z$. Taking limits of the latter inequality, as $j \to \infty$, we obtain $\beta=k \alpha$, 
for some $k$ in the same range. 
Combining with Lemma~\ref{5}, we conclude that $g=Q_\alpha\co{k}$ near $0$. 
\end{proof}

\section{Circle maps}
We shall employ techniques from complex dynamics to study the analytic symmetries of the maps $S_{a,b}$. 
So we consider the complexified family of maps $S_{a,b}(z)=z+a+b \sin (2\pi z)$, for $z\in \mathbb{C}$, but real values of $a$ 
and $b$.
Using the projection $z \mapsto e^{2\pi i z}$ from $\mathbb{C}$ to $\mathbb{C}^*= \mathbb{C}\setminus \{0\}$, 
$S_{a,b}$ induces the holomorphic map 
\[f_{a,b}(w)=e^{2\pi i a} w e^{\pi b (w-1/w)}\]
from $\mathbb{C}^*$ to $\mathbb{C}^*$. 
Evidently, $f_{a,b}$ preserves the unit circle $\mathbb{T}=\{ w\in \subset \mathbb{C}\; ; \; |z|=1\}$, and for $a\in \mathbb{R}$ 
and $b\in (0, 1/(2\pi))$, $f_{a,b}$ is a diffeomorphism of $\mathbb{T}$. 
Below we always assume that $a\in \mathbb{R}$ and $b\in (0, 1/(2\pi))$. 

\begin{thm}\label{T:parabolic-circle}
Assume that $f_{a,b}$ has a parabolic cycle on $\mathbb{T}$, for some $a$ and $b$. Then, $\cent(f_{a,b})$ is trivial. 
\end{thm}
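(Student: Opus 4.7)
The plan is to adapt the proof of Theorem~\ref{T:parabolic} to the Arnold family, where two new features must be handled: the parabolic set is a cycle of period $q$ (with $p/q$ the rotation number of $f_{a,b}$) rather than a single fixed point, and $f_{a,b}$ has two critical points in $\mathbb{C}^*$ rather than one. Let $g \in \cent(f_{a,b})$ and denote the parabolic cycle by $\zeta_0, \dots, \zeta_{q-1} \in \mathbb{T}$. Since $g$ is an orientation-preserving circle diffeomorphism commuting with $f_{a,b}$, it permutes the cycle cyclically; after replacing $g$ by $G := f_{a,b}^{\circ j} \circ g$ for a suitable $j \in \{0,1,\dots,q-1\}$, I may assume $G(\zeta_k) = \zeta_k$ for every $k$. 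In a local chart at $\zeta_0$, the map $F := f_{a,b}^{\circ q}$ has the form $F(z) = z + a_2 z^2 + O(z^3)$ with $a_2 \neq 0$, the parabolic fixed point having multiplicity $2$ (the generic situation on the boundary of an Arnold tongue; higher multiplicity is handled by the obvious adjustment to the exponents below). Running the coefficient-matching computations of Lemmas~\ref{L:derivative-qth-root} and~\ref{L:multiplicity} with $q$ and $q+1$ replaced by $1$ and $2$ gives $G'(\zeta_0) = 1$ and, unless $G \equiv \mathrm{id}$, $G(z) = z + b_2 z^2 + O(z^3)$ with $b_2 \neq 0$. Set $\mu := b_2/a_2$.

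Next, build attracting and repelling Fatou coordinates $\phi_{att}, \phi_{rep}$ at $\zeta_0$ that linearise $F$ to $T_1$, extend $\phi_{att}$ over the immediate basin component $B_0$ of $\zeta_0$ under $F$, and form the horn map $h = \phi_{att} \circ \phi_{rep}^{-1}$, which descends to a holomorphic map $H$ on a punctured neighbourhood of $0 \in \mathbb{C}$ with removable singularity at $0$. The arguments of Lemma~\ref{L:translations-on-petals} carry over to give $G = \phi_{att}^{-1} \circ T_\mu \circ \phi_{att}$ on the attracting petal and the analogous identity on the repelling side, and the argument of Lemma~\ref{L:H-symmetry} then yields $H(e^{2\pi i \mu} \xi) = e^{2\pi i \mu} H(\xi)$ near $0$.

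The main new point lies at the analogue of Lemmas~\ref{L:unique-critical-value} and~\ref{L:mu-integer}. Since $f_{a,b}$ has two critical points $w_c, 1/w_c \in \mathbb{R}_{<0}$, namely the roots of $\pi b w^2 + w + \pi b = 0$, the horn map can carry two distinct critical values rather than one, and the rotation identity alone only shows that the critical set of $H$ is $e^{2\pi i \mu}$-invariant. To sharpen this to $\mu \in \mathbb{Z}$, I exploit the anti-holomorphic involution $J(w) = 1/\bar{w}$: since $S_{a,b}$ has real coefficients, $J$ commutes with $f_{a,b}$, fixes $\mathbb{T}$ pointwise, swaps $w_c \leftrightarrow 1/w_c$, and preserves each immediate basin component (as each such component meets $\mathbb{T}$). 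The map $g$, being the complexification of a real-analytic circle diffeomorphism, also commutes with $J$, hence so does $G$. Normalising $\phi_{att}$ to take real values on $\mathbb{T} \cap B_0$ then gives $\phi_{att} \circ J = \overline{\phi_{att}}$ on $B_0$, so the two critical values of $H$ in $\mathbb{C}^*$ take the form $e^{2\pi i x - 2\pi y}$ and $e^{2\pi i x + 2\pi y}$ for some $x \in \mathbb{R}$ and some $y \neq 0$ (the strict inequality $y \neq 0$ holds because $y = 0$ would force a critical point of $f_{a,b}$ onto $\mathbb{T}$, which fails for $b \in (0, 1/(2\pi))$). These two values have distinct moduli, and the modulus-preserving rotation $\xi \mapsto e^{2\pi i \mu} \xi$ cannot exchange them; it must fix each, forcing $\mu \in \mathbb{Z}$.

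With $\mu \in \mathbb{Z}$, the relation $G = \phi_{att}^{-1} \circ T_\mu \circ \phi_{att} = F^{\circ \mu}$ holds on the attracting petal, and analytic continuation promotes it to an identity on any connected open set where both sides are defined. Thus $g = f_{a,b}^{\circ (q\mu - j)}$, proving that $\cent(f_{a,b})$ is trivial. The essential new content, and the hardest step, is the critical-values analysis in the previous paragraph: the anti-holomorphic symmetry $J$ of the Arnold family forces the two critical values of $H$ to be complex conjugates, hence to lie on opposite sides of the unit circle in $\mathbb{C}^*$, which is precisely what rules out a non-integer $\mu$.
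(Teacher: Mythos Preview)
Your approach is essentially the paper's: both exploit the anti-holomorphic symmetry $\tau(w)=1/\bar w$ (your $J$) to show that the two critical values of the horn map $H$ come from complex-conjugate points in the lifted coordinate, i.e., have the same argument in $\mathbb{C}^*$. There is one genuine gap in your final step, however: you call $\xi\mapsto e^{2\pi i\mu}\xi$ a ``modulus-preserving rotation'', but this presupposes $\mu\in\mathbb{R}$, which you have not argued. The fix is immediate from what you have already set up --- since $G$ commutes with $J$ and you normalised $\phi_{att}\circ J=\overline{\phi_{att}}$, the conjugated map $T_\mu$ must commute with complex conjugation, forcing $\mu\in\mathbb{R}$. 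The paper instead treats the real and imaginary parts of $\mu$ separately: the common-argument property of the critical values gives $\Re\mu\in\mathbb{Z}$ directly, and then $\Im\mu\neq 0$ is ruled out by observing that the $e^{2\pi i\mu}$-invariance of $\Dom H$ would make $H$ entire, contradicting the accumulation of critical points near $\partial(\Dom H)$.

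A smaller point: your justification that $y\neq 0$ (``would force a critical point of $f_{a,b}$ onto $\mathbb{T}$'') does not go through as stated, because the extended $\phi_{att}$ is not injective on the full basin, so $\phi_{att}(c_1)\in\mathbb{R}$ does not imply $c_1\in\mathbb{T}$. This is harmless for the argument: if $y=0$ the two critical values coincide and you are back in the single-critical-value situation of Lemma~\ref{L:mu-integer}, where $e^{2\pi i\mu}v=v$ gives $\mu\in\mathbb{Z}$ immediately.
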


Let us fix an arbitrary $f_{a,b}$ which has a parabolic cycle on $\mathbb{T}$, say $\{w_i\}_{i=1}^n$, of period $n\geq 1$.  
By relabelling if necessary, we may assume that $f_{a,b}(w_i)=w_{i+1}$, with the subscripts calculated modulo $n$.
Consider the map 
\[F_{a,b} = f_{a,b}\co{n}: \mathbb{C}^* \to \mathbb{C}^*.\] 
Each $w_i$ is a parabolic fixed point of $F_{a,b}$ with multiplier $+1$. 
For $1 \leq i \leq n$, let $U_i \subset \mathbb{C}^*$ denote the immediate basin of attraction of $w_i$ for the iterates of $F_{a,b}$. 
That is, $U_i$ is the union of the connected components of the basin of attraction of $w_i$ which contain $w_i$ on their boundary. 
%\textit{A priori}, each $U_i$ may have several components. However, we shall show in a moment that there is a rather simple scenario here. 
The following lemma is a special case of a more general result by Geyer \cite[thm 4.4]{Ge01}. 

\begin{lem}\label{L:parabolic-basins-circle}
For every $1\leq i \leq n$, $U_i$ consists of a single connected component, which is invariant under $\tau$, and contains precisely 
two distinct critical points of $F_{a,b}$.  
Moreover, $\cup_{i=1}^n \overline{U_i}= \mathbb{T}$. 
\end{lem}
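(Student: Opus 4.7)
My plan is to verify the specialising hypotheses of Geyer's theorem and then extract the three conclusions in the order they are stated. The first ingredient is the structure of the critical set of $f_{a,b}$ and its natural symmetry. A short computation with the logarithmic derivative gives $f'_{a,b}(w)/f_{a,b}(w)=1/w+\pi b+\pi b/w^2$, so the critical points are the roots of $\pi b\, w^2 + w + \pi b = 0$. For $b \in (0, 1/(2\pi))$ the discriminant $1 - 4\pi^2 b^2$ is strictly positive, producing two distinct negative real critical points $c_+, c_-$ with $c_+ c_- = 1$. Writing $\tau(w) = 1/\bar w$ for the reflection across $\mathbb{T}$, this gives $\tau(c_+)=c_-$, and a direct calculation using $a, b \in \mathbb{R}$ also shows $\tau \circ f_{a,b} = f_{a,b} \circ \tau$. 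Hence $\tau$ is an antiholomorphic symmetry of the whole dynamical system, fixes every $w_i \in \mathbb{T}$, and permutes the Fatou components of $F_{a,b}$.

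Next I would locate the critical orbits. The $n$ components $U_1, \ldots, U_n$ form a single cycle under $f_{a,b}$, and by the Fatou--Shishikura bound (in the class of transcendental self-maps of $\mathbb{C}^*$ treated by Geyer) this parabolic cycle of Fatou components must capture at least one critical orbit. The $\tau$-equivariance then forces both $c_\pm$ to enter the cycle, and since $f_{a,b}$ has only these two critical points, all available critical mass is absorbed here, ruling out any other non-repelling cycle of Fatou components. A routine chain-rule count gives $2n$ critical points of $F_{a,b} = f_{a,b}^{\circ n}$ inside $\bigcup U_i$, namely one pre-image of each $c_\pm$ in each component of the cycle.

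With this setup I would invoke Geyer's theorem to conclude that each $U_i$ is a single, simply-connected, $\tau$-invariant component containing exactly two critical points of $F_{a,b}$. The $\tau$-invariance is the crucial geometric input: if $U_i$ were not $\tau$-invariant, then $\tau(U_i)$ would furnish a second component of the basin attached at $w_i$, doubling the critical-point demand past what is available. The even distribution of the $2n$ critical points as two per $U_i$ is then forced by the cyclic $f_{a,b}$-action on $\{U_1, \ldots, U_n\}$. I expect this step to be the main obstacle: ruling out a mirror-image splitting of each parabolic basin at $w_i$ and simultaneously pinning the critical-point count is exactly where the deeper content of Geyer's theorem enters, and a soft argument using only the Fatou--Shishikura bound is not sufficient on its own.

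Finally, for $\bigcup_{i=1}^n \overline{U_i} = \mathbb{T}$, I would use that $f_{a,b}|_\mathbb{T}$ is a real-analytic circle diffeomorphism with rational rotation number whose only periodic orbit is $\{w_i\}_{i=1}^n$. Standard circle-diffeomorphism theory then gives that every point of $\mathbb{T}$ has forward orbit converging to this parabolic cycle. Since each $U_i$ contains a one-sided neighbourhood of $w_i$ in $\mathbb{T}$ (the intersection of an attracting petal of $F_{a,b}$ at $w_i$ with $\mathbb{T}$), backward iteration covers all of $\mathbb{T}$, giving $\bigcup_i \overline{U_i} \supseteq \mathbb{T}$; the reverse inclusion is immediate from $w_i \in \overline{U_i} \subseteq \mathbb{T}$ together with invariance under $f_{a,b}$.
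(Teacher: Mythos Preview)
Your outline matches the paper's strategy closely: compute the two $\tau$-conjugate real critical points of $f_{a,b}$, use $\tau$-symmetry, count critical points against basin components, and finish with circle-map dynamics for the covering of $\mathbb{T}$. Two points deserve comment.

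\medskip

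\textbf{The ``main obstacle'' is handled elementarily.} You flag the single-component / two-critical-point step as the place where ``the deeper content of Geyer's theorem enters'' and say Fatou--Shishikura alone is insufficient. The paper does \emph{not} invoke Geyer as a black box here; it gives a direct argument using only the classical Fatou fact that every connected component of an immediate parabolic basin contains a critical point. The argument runs: since the $U_i$ are cyclically permuted by $f_{a,b}$ and $c_\pm$ are $\tau$-conjugate, some $U_j$ contains both $c_\pm$ and no other critical point of $F_{a,b}$; the critical values of $f_{a,b}$ then lie in $U_{j+1}$, so conformal inverse branches $f_{a,b}^{-l}: U_j \to U_{j-l}$ exist, giving exactly two critical points of $F_{a,b}$ in each $U_i$. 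The $\tau$-invariance of each component, combined with the fact that the two critical points in $U_i$ are $\tau$-conjugate, forces each component to contain both, hence there is only one component. So the step you anticipated as hard is done with more elementary tools than you suggest; no Fatou--Shishikura bound is needed.

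\medskip

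\textbf{Error in the last line.} You write ``the reverse inclusion is immediate from $w_i \in \overline{U_i} \subseteq \mathbb{T}$''. This is false: $U_i$ is an open subset of $\mathbb{C}^*$, so $\overline{U_i}$ is certainly not contained in $\mathbb{T}$. The lemma's equality should be read as the inclusion $\mathbb{T} \subseteq \bigcup_i \overline{U_i}$, which is all that is proved in the paper (by the same convergence-to-the-parabolic-cycle argument you give) and all that is used later.
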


\begin{proof}
The critical points of $f_{a,b}$ are the solutions of the equation $f_{a,b}'(w)= e^{2\pi i a} e^{\pi b (w-1/w)} (1+ \pi b (w+1/w))=0$. 
Evidently, if $w$ is a solution of this equation, then $\overline{w}$, $1/w$ and $1/\overline{w}$ are all solutions of the equation. 
Thus, $w=\overline{w}$, and hence, the distinct solution of the equation are of the form $c_1$ and $c_2=\tau(c_1)$, 
for some $c_1 \in (-1,0)$. 

Since $F_{a,b}$ is $\tau$-symmetric, it follows that $\tau(U_i)=U_i$, for $1 \leq i \leq n$. 
Moreover, since $F_{a,b}(w_i)=w_i$, every connected component of each $U_i$ is invariant under $F_{a,b}$.
By a classical result of Fatou, see \cite{Mi06}, every connected component of each $U_i$ contains at least one critical point of 
$F_{a,b}$. On the other hand, the critical points of $F_{a,b}$ are the pre-images of the critical points of $f_{a,b}$.
Since $f_{a,b}(U_i)= U_{i+1}$, it follows that there is $j$ with $1 \leq j \leq n$, such that $U_j$ contains the critical points $c_1$ 
and $c_2$. 
Moreover, $c_1$ and $c_2$ are the only critical points of $F_{a,b}$ inside $U_j$. 
Then, the critical values of $f_{a,b}$ belong to $U_{j+1}$, which is distinct from $U_j$.

By the maximum principle, every connected component of each $U_i$ is a simply connected region. 
Since the critical values of $f_{a,b}$ belong to $U_{j+1}$, any other $U_i$ does not contain any critical values of $f_{a,b}$. 
These imply that for $1 \leq l \leq n-1$ there is a conformal branch of $f_{a,b}\co{-l}$ from $U_j$ to $U_{j-l}$.
Therefore, each $U_i$ contains exactly two critical points of $F_{a,b}$. 

Every connected component of each $U_i$ is invariant under $F_{a,b}$ and $\tau$, and contains at least one critical point of 
$F_{a,b}$. 
Therefore, the number of the critical points in $U_i$ is two times the number of the connected components of $U_i$. 
Since $U_i$ contains exactly two critical points of $F_{a,b}$, $U_i$ consists of a single connected component containing both critical 
points. 

Since each $U_i$ has a single connected component, each $w_i$ has a single attraction vector and a single repulsion vectors. 
As $\mathbb{T}$ is invariant, the attraction and repulsion vectors are the tangent vectors to $\mathbb{T}$ at $w_i$.
Fix an arbitrary $i$ and consider an arc of $\mathbb{T}$ cut off by $w_i$ and $w_{i+1}$ which does not contain any other $w_l$.
This arc is invariant under $F_{a,b}$, and the orbit of any point on this arc must converge to either $w_i$ or $w_{i+1}$. 
Otherwise, there will be another fixed point of $F_{a,b}$ on this arc which is distinct from $w_i$ and $w_{i+1}$, and is either 
attracting or parabolic. 
This is a contradiction since such a cycle requires its own critical points distinct from the grand orbit of $c_1$ and $c_2$.
\end{proof}

By relabelling the points $w_i$, and $U_i$ accordingly, we may assume that $U_1$ contains the critical points $c_1$ and $c_2$ 
of $f_{a,b}$.

Since there is only one attracting direction for $F_{a,b}$ at $w_1$, it follows that the multiplicity of the parabolic fixed point at 
$w_1$ is equal to $+2$. 
As in the previous section, there are attracting and repelling Fatou coordinates 
\[\phi_{att}: \mathcal{P}_{att} \to \mathbb{C}, \qquad \phi_{rep}: \mathcal{P}_{rep} \to \mathbb{C},\]
satisfying the functional equations 
\[\phi_{att} \circ F_{a,b} = \phi_{att}+1 , \qquad \phi_{rep} \circ F_{a,b} = \phi_{rep}+1,\]
with $\phi_{att}(\mathcal{P}_{att})= \Omega_{att}^s$ and $\phi_{rep}(\mathcal{P}_{rep})= \Omega_{rep}^s$ for some $s>0$, 
$F_{a,b}\co{j}$ converges to $w_1$ uniformly on compact subsets of $\mathcal{P}_{att}$ as $j\to +\infty$, 
and $F_{a,b}\co{j}$ converges to $w_1$ uniformly on compact subsets of $\mathcal{P}_{rep}$ as $j \to -\infty$. 
The attracting coordinate may be extended to a holomorphic map $\phi_{att}: U_1 \to \mathbb{C}$ using the above functional 
equation. 

The map 
\[h=\phi_{att} \circ \phi_{rep}^{-1}\]
has a maximal domain of definition, which is $\phi_{rep}^{-1}(U_1)+\mathbb{Z}$. 
This induces a holomorphic map $H$ defined on a neighbourhood of $0$, with $H(0)=0$. 

\begin{lem}\label{L:horn-map-circle}
The horn map $H$ has infinitely many critical points, which are mapped to critical values $v_1$ and $v_2$ satisfying 
$\arg v_1= \arg v_2$. 
\end{lem}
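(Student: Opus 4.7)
The strategy mirrors Lemma~\ref{L:unique-critical-value}, but with two differences dictated by the circle setting: by Lemma~\ref{L:parabolic-basins-circle} the immediate basin $U_1$ now contains \emph{two} critical points of $F_{a,b}$, namely $c_1$ and $c_2=\tau(c_1)$, so $H$ will a priori have two distinct critical values; and the $\tau$-symmetry of $F_{a,b}$ must be exploited to force these two critical values to have equal arguments.

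The argument that $H$ has infinitely many critical points, with critical values contained in the two-element set $\{v_i=e^{2\pi i\phi_{att}(c_i)}:i=1,2\}$, is essentially identical to the one in Lemma~\ref{L:unique-critical-value}. Differentiating $\phi_{att}\circ F_{a,b}=\phi_{att}+1$ shows that $\phi_{att}$ has a simple critical point at every $z\in U_1$ whose $F_{a,b}$-orbit passes through $c_1$ or $c_2$; these preimages accumulate on $\partial U_1$, and via the conformal map $\phi_{rep}^{-1}$ they produce critical points of $h=\phi_{att}\circ\phi_{rep}^{-1}$ accumulating at the end of $\Pi'$ corresponding to $0\in\Dom H$. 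The functional equation shows that the values of $\phi_{att}$ at these critical points are exactly $\phi_{att}(c_i)-k$, $k\geq 0$, which project under $\xi\mapsto e^{2\pi i\xi}$ onto $\{v_1,v_2\}$.

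The main step is to prove $\arg v_1=\arg v_2$. Since $F_{a,b}\circ\tau=\tau\circ F_{a,b}$ and $\tau(U_1)=U_1$ by Lemma~\ref{L:parabolic-basins-circle}, the map $\overline{\phi_{att}\circ\tau}$ is a holomorphic attracting Fatou coordinate for $F_{a,b}$ on $U_1$. By uniqueness of the Fatou coordinate up to an additive constant, there exists $c\in\mathbb{C}$ with
\[\overline{\phi_{att}(\tau(z))}=\phi_{att}(z)+c,\qquad z\in U_1.\]
Applying this relation twice and using $\tau^2=\mathrm{id}$ forces $c+\overline{c}=0$, so $c\in i\mathbb{R}$. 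Post-composing $\phi_{att}$ with a suitable imaginary translation (which rotates $v_1$ and $v_2$ by the same factor and hence preserves $\arg v_1-\arg v_2$) we may assume $c=0$; equivalently $\phi_{att}$ takes real values on the $\tau$-fixed arc of $\mathbb{T}\cap\mathcal{P}_{att}$. Setting $z=c_1$ then gives $\phi_{att}(c_2)=\overline{\phi_{att}(c_1)}$, whence
\[\frac{v_1}{v_2}=e^{2\pi i(\phi_{att}(c_1)-\overline{\phi_{att}(c_1)})}=e^{-4\pi\,\Im\phi_{att}(c_1)}\in(0,\infty),\]
yielding $\arg v_1=\arg v_2$. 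The only new subtlety beyond Lemma~\ref{L:unique-critical-value} is identifying this $\tau$-compatible normalisation of $\phi_{att}$; once that is done, the key identity is a one-line calculation.
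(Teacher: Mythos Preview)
Your proof is correct and follows essentially the same approach as the paper's: both use the $\tau$-symmetry of $F_{a,b}$ together with the uniqueness of the Fatou coordinate to obtain $\phi_{att}(c_2)=\overline{\phi_{att}(c_1)}$, from which $\arg v_1=\arg v_2$ is immediate. Your treatment is in fact slightly more careful than the paper's, which simply asserts $\phi_{att}\circ\tau=\overline{\phi_{att}}$ without discussing the normalisation needed to eliminate the a priori additive constant $c\in i\mathbb{R}$.
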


\begin{proof}
Any pre-image of $c_1$ and $c_2$ under $F_{a,b}$ within $U_1$ is a critical point of $\phi_{att}$. 
The set of the accumulation points of those pre-images is equal to the boundary of $U_1$ (which is contained in the Julia set of 
$F_{a,b}$). 
By the functional equation for $\phi_{att}$, $\phi_{att}$ maps those critical points into the set 
$\phi_{att}(c_1)+\mathbb{Z}$ or $\phi_{att}(c_2)+\mathbb{Z}$.   
On the other hand, $\phi_{rep}$ is conformal on $\Omega_{rep}^s$.
This implies that the only critical value of $h$ are contained in  $(\phi_{att}(c_1)+\mathbb{Z}) \cup (\phi_{att}(c_2)+\mathbb{Z})$. 

Since $F_{a,b}$ is $\tau$-symmetric, both $\phi_{att}$ and $\phi_{rep}$ are $\tau$-symmetric.
That is, $\phi_{att} \circ \tau = \overline{\phi_{att}}$ and $\phi_{rep} \circ \tau = \overline{\phi_{rep}}$.
This is due the uniqueness of a Fatou-coordinate up to translation by a constant. 
Combining with the above paragraph, we conclude that  $\overline{\phi_{att}(c_1)}= \phi_{att}(c_2)$, and hence 
the critical values of $H$ have the same argument.
\end{proof}

\begin{proof}[Proof of Theorem~\ref{T:parabolic-circle}]
The proof already starts at the beginning of this section. Fix an arbitrary $f_{a,b}$ with a parabolic cycle $\{w_i\}_{i=1}^n$ of period 
$n$. Let us also fix an arbitrary $g \in \cent(f_{a,b})$.   
The commutation implies that $g(w_1)$ is a periodic point of period $n$ for $f_{a,b}$. 
By Lemma \ref{L:parabolic-basins-circle}, $f_{a,b}$ has a unique periodic cycle, which is $\{w_i\}_{i=1}^n$. 
Therefore, there is an integer $k \geq 1$ such that $f_{a,b} \co{k} \circ g(w_1)=w_1$. 
Let us define the analytic map 
\[G=f_{a,b} \co{k} \circ g: \mathbb{T} \to \mathbb{T}.\]
As $F_{a,b}$ commutes with $G$, $F_{a,b}(w_1)=w_1$, $F_{a,b}'(w_1)=1$ we may repeat Lemma\ref{L:derivative-qth-root}
to conclude that $G'(w_1)=1$. 
On the other hand, since the multiplicity of $F_{a,b}$ at $w_1$ is equal to $+2$, 
we may repeat Lemma \ref{L:multiplicity} to conclude that the multiplicity of $G$ at $w_1$ is also equal to $+2$. 
That is, $G$ is of the form 
\[G(w)= G(w_1)+ (w-w_1)+ b_2 (w-w_1)^2 + \dots,\]
near $0$, with $b_2\neq 0$. 
As in the previous section, we must have $G=\phi_{att}^{-1} \circ T_\mu \circ \phi_{att}$ on $\mathcal{P}_{att}$ and 
$G=\phi_{rep}^{-1} \circ T_\mu \circ \phi_{rep}$ on $\mathcal{P}_{rep}$, where $\mu =2 b_2/F_{a,b}''(0)$.  
Repeating Lemma~\ref{L:H-symmetry}, we conclude that $H$ must commute with the rotation $\xi \mapsto e^{2\pi i \mu} \xi$ 
near $0$.
Now, as in the proof of Lemma~\ref{L:mu-integer}, we use Lemma~\ref{L:horn-map-circle} instead of Lemma~\ref{L:H-symmetry}, 
to say that if $c$ is a critical point of $H$, then we must have $\arg H(c)= \arg (e^{2\pi i \mu} H(c))$. 
This implies that $\Re \mu \in \mathbb{Z}$. 
On the other hand, if $\Im \mu \neq 0$, since the domain of definition of $H$ is invariant under $\xi \mapsto e^{2\pi i \mu}$, 
we conclude that $H$ is defined over all of $\mathbb{C}$. But this is a contraction since $H$ has infinitely many critical points in 
a bounded region of the plane. 
Therefore, $\mu \in \mathbb{Z}$, and hence $G= F_{a,b}\co{\mu}$.
This completes the proof of Theorem~\ref{T:parabolic-circle}
\end{proof}

Fix an arbitrary $b \in (0, 1/(2\pi))$. 
By a general theorem of Poincar\'e, $f_{a,b}$ has a period point on $\mathbb{T}$ if and only if its rotation number 
$\rho(f_{a,b}) \in \mathbb{Q}$. 
Moreover, by classical results, the map $a \mapsto \rho(f_{a,b})$ is an increasing function of $a \in (0, 1)$. 
It is locally strictly increasing at irrational values, that is, if $\rho(f_{a,b}) \in \mathbb{R} \setminus \mathbb{Q}$ for some $a$, 
then for $a'\in (0,1)$ with $a'>a$, $\rho(f_{a',b})> \rho(f_{a,b})$. 
However, at rational values, the map is constant on a closed interval. 
\footnote{The set of $a$ and $b$ where $\rho(f_{a,b})$ is a rational number has non-empty interior, and is known as 
Arnold tongues. One may refer to \cite{Ar61}, \cite{Fag1999} for basic features of those loci, and the global dynamics of the 
complexified standard family.}  

Given $r>1$, we say that an analytic homeomorphism $g: \mathbb{T} \to \mathbb{T}$ is $r$-good, 
if $g$ is holomorphic on the annulus $1/r < |z|< r $ and maps that annulus to the annulus $1/2 < |z| < 2$. 
Evidently, every analytic homeomorphism of $T$ is $r$-good for some $r>1$. 
Moreover, by Schwarz-Pick lemma, for every $r>1$, the class of $r$-good analytic homeomorphisms of $\mathbb{T}$ forms a 
compact class of maps.  

Let us consider the sets 
\[P=\{(a,b) \in (0, 1) \times (0, 1/(2\pi)) \; ; \; f_{a,b} \text{ has a parabolic cycle on } \mathbb{T}\}.\]
and for each $b\in (0, 1/(2\pi))$, 
\[P_b=\{a \in (0, 1) \; ; \; (a,b) \in P\}.\]

\begin{lem}\label{2'}
For every $(a,b) \in P$, $f_{a,b}\co{k}$ is $r$-good for only finitely many values of $k$. 
\end{lem}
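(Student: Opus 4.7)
The plan is to argue by contradiction, combining a normal-family argument on the annulus $\{1/r<|z|<r\}$ with the topology of the parabolic cycle on $\mathbb{T}$. The key structural input is the unique attracting direction at each parabolic point, supplied by Lemma~\ref{L:parabolic-basins-circle}.

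First I would assume that there is a sequence $k_j$ with $|k_j| \to \infty$ and $f_{a,b}\co{k_j}$ being $r$-good for each $j$. Since $r$-good maps land in the bounded set $\{1/2<|z|<2\}$, Montel's theorem says the family is normal on $\{1/r<|z|<r\}$. Because $f_{a,b}^{-1}$ has the same parabolic cycle $\{w_1,\dots,w_n\}$ on $\mathbb{T}$ as $f_{a,b}$ (inversion preserves parabolic periodic points and only swaps their attracting and repelling tangent directions), by replacing $f_{a,b}$ with $f_{a,b}^{-1}$ if necessary I may assume $k_j \to +\infty$. By the pigeonhole principle, after passing to a subsequence, I may further assume all $k_j$ lie in a common residue class modulo $n$, say $k_j = r_0 + n m_j$ with $m_j \to +\infty$. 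A final subsequence extraction gives $f_{a,b}\co{k_j} \to \phi$ locally uniformly on $\{1/r<|z|<r\}$ for some holomorphic $\phi$; in particular $\phi|_{\mathbb{T}}$ is continuous.

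The core step is to identify $\phi|_{\mathbb{T}}$. By Lemma~\ref{L:parabolic-basins-circle}, each parabolic point $w_i$ has exactly one attracting direction for $F_{a,b} = f_{a,b}\co{n}$, tangent to $\mathbb{T}$, so each arc of $\mathbb{T}\setminus\{w_1,\dots,w_n\}$ is $F_{a,b}$-invariant and all its points flow under $F_{a,b}\co{m}$ as $m\to\infty$ to exactly one of its two parabolic endpoints. Crucially, two arcs adjacent at some $w_i$ cannot share $w_i$ as their attracting endpoint, since that would give $w_i$ two attracting directions. Therefore adjacent arcs flow to different endpoints, and for any $w \in \mathbb{T}\setminus\{w_1,\dots,w_n\}$, $F_{a,b}\co{m_j}(w)$, and hence $f_{a,b}\co{k_j}(w) = f_{a,b}\co{r_0}(F_{a,b}\co{m_j}(w))$, converges to some element of the parabolic cycle depending only on the arc containing $w$. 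Thus $\phi|_{\mathbb{T}}$ is a locally constant, finite-valued function on $\mathbb{T}\setminus\{w_1,\dots,w_n\}$, and is non-constant when $n \geq 2$.

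The contradiction is immediate in both cases. If $n\geq 2$, $\phi|_{\mathbb{T}}$ is a non-constant step function, contradicting its continuity as a uniform limit of continuous maps. If $n=1$ then $\phi|_{\mathbb{T}}\equiv w_1$, so $f_{a,b}\co{k_j}|_{\mathbb{T}}\to w_1$ uniformly; but each $f_{a,b}\co{k_j}$ is a homeomorphism of $\mathbb{T}$ onto itself, so its image cannot be squeezed into an arbitrarily small arc around $w_1$. The main technical obstacle I anticipate is making the adjacency-of-arcs argument rigorous, as it depends on the attracting direction at each parabolic point being unique and tangent to $\mathbb{T}$; this is already encoded in the multiplicity-$2$ assertion of Lemma~\ref{L:parabolic-basins-circle}, so beyond that, the proof is a routine assembly of Montel's theorem and standard circle-map dynamics.
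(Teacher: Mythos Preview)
Your argument is correct, but it takes a considerably more elaborate route than the paper. The paper's proof is a single sentence: ``repeat the proof of Lemma~\ref{2}'', which in turn just says that forward and backward iterates near a parabolic periodic point admit no locally uniformly convergent subsequence. The reason is purely local: if a subsequence $f_{a,b}^{\circ k_j}$ converged uniformly on a neighbourhood $V$ of the parabolic point $w$, then on the attracting (or repelling) petal the limit $\phi$ would equal the constant $f_{a,b}^{\circ r_0}(w)$, hence $\phi$ would be constant on $V$ by the identity theorem; but $(f_{a,b}^{\circ k_j})'(w) = (f_{a,b}^{\circ r_0})'(w) \neq 0$ for all $j$, so $\phi'(w)\neq 0$, a contradiction. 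This works for any parabolic multiplicity and uses nothing about $\mathbb{T}$.

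Your route instead identifies $\phi|_{\mathbb{T}}$ globally via the arc dynamics, invoking the multiplicity-$2$ structure from Lemma~\ref{L:parabolic-basins-circle} to force a step function (when $n\geq 2$) or a constant (when $n=1$), and then contradicts continuity or surjectivity. This is perfectly sound and paints a nice dynamical picture, but it imports more machinery than needed: Lemma~\ref{L:parabolic-basins-circle} and the case split on $n$ are unnecessary once you use the derivative-at-$w$ argument. One small point: in your adjacency claim you should also note that both adjacent arcs cannot flow \emph{away} from $w_i$ either (that would give two repelling directions), though this follows from the same multiplicity-$2$ fact.
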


\begin{proof}
The proof of Lemma~\ref{2} may be repeated here to show this statement.
\end{proof}

For $(a,b) \in P$, we define 
\[K'(a,b,r) = \left \{k \in \mathbb{Z}\; ; \;    f_{a,b}\co{k} \text{ is $r$-good}\right \}.\]

\begin{lem}\label{3'} 
For every $(a,b) \in P$ and every $r>0$, there exists $\delta'(a,b,r)>0$ such that for every $a' \in P_b$ with 
$|a'-a| \leq \delta'(a,b,r)$ we have 
\[K'(a',b,r) \subseteq K'(a,b,r).\] 
\end{lem}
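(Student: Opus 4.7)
The plan is to transpose the three-step argument from the proof of Lemma~\ref{3} to the annular setting. The uniform starting ingredient is a bound $N(r)$ on the number of critical points any $r$-good map may have in the sub-annulus $\{1/\sqrt{r}<|z|<\sqrt{r}\}$; this follows from the compactness of the class of $r$-good homeomorphisms (uniformly bounded on the annulus $\{1/r<|z|<r\}$, hence a normal family) together with the standard fact that a uniformly bounded normal family of holomorphic functions admits a uniform bound on the number of zeros of the derivative in any relatively compact subdomain.

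Next I would cap the positive iterates. Since $(a,b)\in P$, Lemma~\ref{L:parabolic-basins-circle} tells us that the immediate parabolic basins $U_i$ contain the critical points $c_1$, $c_2$ of $f_{a,b}$, so the grand orbit of $\{c_1,c_2\}$ lies entirely in $\bigcup_i U_i$ and accumulates on the parabolic cycle $\{w_i\}\subset\mathbb{T}$. For $L$ sufficiently large, the critical points of $f_{a,b}\co{L}$, which are the preimages of $\{c_1,c_2\}$ under $f_{a,b}\co{j}$ for $0\le j<L$, therefore include more than $N(r)$ points in the sub-annulus $\{1/\sqrt{r}<|z|<\sqrt{r}\}$. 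These critical points vary continuously with $a'$, so an argument-principle argument shows the same lower count persists for $f_{a',b}\co{L}$ when $a'$ is close to $a$, hence for every $l\ge L$ as well. No such $l$ can belong to $K'(a',b,r)$.

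For negative iterates I would argue by obstruction to the analytic continuation of $f_{a,b}\co{-m}|_{\mathbb{T}}$. Its branch locus is the order-$m$ post-critical set $\{f_{a,b}\co{l}(c_j):1\le l\le m,\;j=1,2\}$, which again accumulates on the parabolic cycle. For $M$ large, this locus meets the annulus $\{1/r<|z|<r\}$, so any single-valued continuation of $f_{a,b}\co{-M}|_{\mathbb{T}}$ is obstructed and $f_{a,b}\co{-M}$ fails to be $r$-good. Continuity in $a'$ of the finitely many relevant forward iterates of $c_1$, $c_2$ transfers this obstruction to $f_{a',b}\co{-m}$ for every $m\ge M$ and all $a'$ close to $a$.

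Finally, for the finite remaining range $-M\le k\le L$ with $k\notin K'(a,b,r)$, a standard normal-families argument handles each $k$: if $f_{a'_n,b}\co{k}$ were $r$-good along a sequence $a'_n\to a$ in $P_b$, a subsequential limit in the $r$-good class would be an $r$-good map equal to $f_{a,b}\co{k}$, contradicting $k\notin K'(a,b,r)$. Only finitely many such $k$ need to be considered, so the minimum of the thresholds produced above yields a single $\delta'(a,b,r)$. The main delicate point I expect is the continuous tracking of critical points and branch points of iterates under perturbation — in particular, ensuring that enough of them remain in the prescribed sub-annulus rather than drifting outside — which is essentially an argument-principle computation for zeros of $(f_{a',b}\co{L})'$, but requires care because $f_{a,b}$ is transcendental on $\mathbb{C}^*$ with essential singularities at $0$ and $\infty$.
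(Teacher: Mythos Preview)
Your proposal is correct and follows essentially the same approach as the paper, which simply refers back to the proof of Lemma~\ref{3} without further comment. You have faithfully transposed that three-step argument (critical-point count for large positive iterates, failure of analytic continuation for large negative iterates, compactness for the finite middle range) to the annular setting, supplying somewhat more detail than the paper does.
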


\begin{proof}
This is the same as the proof of Lemma~\ref{3}. 
\end{proof}

\begin{lem}\label{4'}
For every $(a,b) \in P$, every $r>0$, and every $\epsilon>0$, there exists $\kappa'(a,b,r, \epsilon)>0$ which 
satisfies the following. 
For every $a' \in P_b$ with $|a'-a| \leq \kappa'(a,b,r,\epsilon)$ and $\rho(f_{a', b}) \in \mathbb{R} \setminus \mathbb{Q}$, 
and every $r$-good map $g$ which commutes with $f_{a',b}$, there exists $k \in K'(a,b,r)$ such that 
$|\rho(g)- k \rho(f_{a,b})|<\epsilon \mod \mathbb{Z}$.
\end{lem}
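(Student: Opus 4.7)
The plan is to mimic the proof of Lemma~\ref{4} in the circle setting, with the space of germs replaced by the compact class of $r$-good analytic homeomorphisms of $\mathbb{T}$, invoking Theorem~\ref{T:parabolic-circle} and Lemma~\ref{2'} in place of Theorem~\ref{T:parabolic} and Lemma~\ref{2}.

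Arguing by contradiction, suppose no such $\kappa'(a,b,r,\epsilon)$ exists. Then one may pick a sequence $a'_n \to a$ with $\rho(f_{a'_n,b}) \in \mathbb{R} \setminus \mathbb{Q}$ and $r$-good analytic homeomorphisms $g_n$ of $\mathbb{T}$ commuting with $f_{a'_n,b}$, such that for every $k \in K'(a,b,r)$ one has $|\rho(g_n) - k\rho(f_{a,b})| \geq \epsilon$ mod $\mathbb{Z}$. By the compactness of the $r$-good class noted immediately before Lemma~\ref{2'}, after extracting a subsequence, $g_n \to g$ locally uniformly on the annulus $\{1/r < |z| < r\}$. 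The limit $g$ is again $r$-good, and remains an injective map of the annulus (in particular a homeomorphism of $\mathbb{T}$) by Hurwitz's theorem applied to the differences $g_n - c$ for values $c$ attained on $\mathbb{T}$. Passing the identity $g_n \circ f_{a'_n,b} = f_{a'_n,b} \circ g_n$ to the limit, which is valid because $f_{a'_n,b} \to f_{a,b}$ locally uniformly on $\mathbb{C}^*$, shows that $g$ commutes with $f_{a,b}$ near $\mathbb{T}$, so $g \in \cent(f_{a,b})$.

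Now Theorem~\ref{T:parabolic-circle} yields $g = f_{a,b}\co{k}$ for some $k \in \mathbb{Z}$, and $r$-goodness of $g$ forces $k \in K'(a,b,r)$ by the definition of the latter set. Finally, continuity of the rotation number under uniform convergence of circle homeomorphisms gives $\rho(g_n) \to \rho(g) = k\rho(f_{a,b})$ mod $\mathbb{Z}$, contradicting the defining property of the $g_n$.

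The argument is essentially a direct transcription of Lemma~\ref{4}, so no substantial obstacle is anticipated; the two points worth a brief check are the upgrade from convergence of $g_n$ on $\mathbb{T}$ to convergence on the full annulus, handled by the normality built into $r$-goodness, and the non-degeneracy of the limit $g$, handled by Hurwitz. Once these are in place, the remainder is purely formal.
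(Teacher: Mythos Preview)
Your proof is correct and follows the same contradiction argument as the paper, which simply says the proof is identical to that of Lemma~\ref{4} together with continuity of $x\mapsto\rho(f_{x,b})$. You supply a few details (Hurwitz for injectivity of the limit, continuity of the rotation number under uniform convergence) that the paper leaves implicit, but the route is the same.
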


\begin{proof}
The proof is identical to the one for Lemma~\ref{4}. Here one uses the continuity of the map $x \mapsto \rho(f_{x,b})$, 
for $x\in \mathbb{R}$. 
\end{proof}

\begin{lem}\label{5'} 
Assume that $\rho(f_{a,b}) \in \R \setminus \Q$. 
If $g: \mathbb{T} \to \mathbb{T}$ is an analytic map which commutes with $f_{a,b}$ and $\rho(g)= k \rho(f)$ for some $k \in \Z$, 
then $g= f \co{k}$ on $\mathbb{T}$. 
\end{lem}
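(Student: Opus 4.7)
The plan is to reduce to the case $k=0$ and then exploit the density of orbits of $f_{a,b}$ on $\mathbb{T}$, combined with the identity principle for real-analytic maps.

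First I would set $h = f_{a,b}\co{-k} \circ g$. Since $g$ and every iterate of $f_{a,b}$ are orientation-preserving analytic diffeomorphisms of $\mathbb{T}$, so is $h$, and it commutes with $f_{a,b}$. The rotation number is a homomorphism on the group generated by two commuting circle homeomorphisms, so
\[\rho(h) = \rho(g) - k \rho(f_{a,b}) = 0 \pmod{\mathbb{Z}}.\]
A circle homeomorphism with zero rotation number has at least one fixed point; pick $p \in \mathbb{T}$ with $h(p) = p$.

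Next I would use commutativity to spread this fixed point around the orbit. For every $n \in \mathbb{Z}$,
\[h(f_{a,b}\co{n}(p)) = f_{a,b}\co{n}(h(p)) = f_{a,b}\co{n}(p),\]
so the entire $f_{a,b}$-orbit of $p$ lies in $\mathrm{Fix}(h)$. Since $\rho(f_{a,b}) \in \mathbb{R}\setminus\mathbb{Q}$, Denjoy's theorem (applicable here because $f_{a,b}$ is an analytic, hence $C^2$, diffeomorphism of $\mathbb{T}$) guarantees that $f_{a,b}$ is topologically conjugate to the irrational rotation $R_{\rho(f_{a,b})}$, so this orbit is dense in $\mathbb{T}$.

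Finally I would invoke analyticity. The map $h$ extends holomorphically to some open annular neighbourhood of $\mathbb{T}$ in $\mathbb{C}^*$ (any analytic diffeomorphism of $\mathbb{T}$ does), and $h - \mathrm{id}$ is a holomorphic function on that neighbourhood whose zero set contains the dense subset $\{f_{a,b}\co{n}(p)\}_{n \in \mathbb{Z}} \subset \mathbb{T}$. The identity principle forces $h \equiv \mathrm{id}$ on $\mathbb{T}$, and hence $g = f_{a,b}\co{k}$ on $\mathbb{T}$. There is no serious obstacle here; the only point to be careful about is verifying that $\rho$ behaves additively on the commuting pair $(f_{a,b}\co{-k}, g)$ and that we are entitled to apply Denjoy's theorem, both of which are standard in the analytic setting.
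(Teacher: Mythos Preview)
Your proof is correct and follows essentially the same route as the paper: reduce to rotation number zero by composing with $f_{a,b}\co{-k}$, obtain a fixed point, propagate it along the dense $f_{a,b}$-orbit via commutation, and conclude by analyticity. The only differences are cosmetic---you name Denjoy's theorem and the identity principle explicitly, whereas the paper simply asserts density of orbits and that a dense fixed-point set forces the identity.
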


\begin{proof}
By considering $f_{a,b}\co{-k} \circ g$ instead, we may assume that $\rho(g)=0$. 
By Poincar\'e's theorem, $g$ has a fixed point, and then by the commutation of $f_{a,b}$ and $g$, any iterate of that fixed point by 
$f_{a,b}$ must be a fixed point of $g$. 
Since the orbit of any point in $\mathbb{T}$ by $f_{a,b}$ is dense on $\mathbb{T}$, $g$ has a dense set of 
fixed points. Thus, $g$ is the identity map on $\mathbb{T}$. 
\end{proof}

\begin{proof}[Proof of Theorem~\ref{T:elliptic-circle}]
The proof is similar to the one for Theorem~\ref{T:elliptic}, using Theorem~\ref{T:parabolic-circle} instead of 
Theorem~\ref{T:parabolic}.

Fix an arbitrary $b\in (0, 1/(2\pi))$, and start with an arbitrary $a \in P_b$.
We inductively define an strictly increasing sequence of parameters $a_n \in P_b$, for $n\geq 1$, so that for all $1 \leq l \leq j<n$ 
we have
\begin{equation} \label{6'}
|a_n - a_j |<\delta' (a_j, b,1/j),
\end{equation}
\begin{equation} \label{7'}
|a_n - a_j|< \kappa'(a_j, b,1/l,1/j),
\end{equation}
\begin{equation} \label{8'}
|\rho(f_{a_n,b}) - \rho(f_{a_j,b}) |< 1/q_j^2,
\end{equation}
where $p_j/q_j =\rho(f_{a_j,b}) \in \Q$ and $(p_j, q_j)=1$. 

Let $a= \lim_{n\to \infty} a_n$.
Since the sequence $a_n$ is strictly increasing, the sequence $p_n/q_n$ must be increasing with at most two consecutive terms 
identical. It follows from Equation~\eqref{8'} that $q_n \to \infty$, as $n\to \infty$, and $\rho(f_{a,b}) \in \R \setminus \Q$.

Taking limit as $n\to \infty$ in Equation~\eqref{7'}, we note that $|a - a_j | \leq \kappa'(a_j, b,1/l,1/j)$, for every 
$1 \leq l \leq j$.

Assume that $g$ is a orientation preserving analytic homeomorphism of $\mathbb{T}$ which commutes with $f_{a,b}$. 
There is $l \geq 1$ such that $g$ is $1/l$-good.

By Equation~\eqref{6} and Lemma~\ref{3}, we obtain $K'(a_j, b,1/l) \subseteq K'(a_l,b,1/l)$, for $1 \leq l \leq j$.

By Lemma~\ref{4}, for every $j \geq l$, there exists $k \in \Z$ with $k \in K'(a_j, b,1/l) \subseteq K'(a_l,b,1/l)$ such that 
$|\rho(g) - k p_j/q_j|<1/j \mod \Z$. Taking limits of the latter inequality, as $j \to \infty$, we obtain $\rho(g) = k \rho(f_{a,b})$, 
for some $k$ in the same range. 
Combining with Lemma~\ref{5}, we conclude that $g=f_{a,b} \co{k}$ on $\mathbb{T}$. 
\end{proof}

\textbf{Acknowledgement:} A.~Avila and D.~Cheraghi gratefully acknowledge funding from EPSRC (UK); grant EP/M01746X/1 
--rigidity and small divisors in holomorphic dynamics, while carrying out this research. 

\bibliographystyle{amsalpha}
\bibliography{/Users/davoud/Desktop/19-pc-topology/Data}
\end{document}